\title{How to determine the {\em sign} of a valuation on $\cc[x,y]$?}
\author{Pinaki Mondal\\ Weizmann Institute of Science\\ pinaki@math.toronto.edu}
\DeclareMathOperator{\nef}{Nef}
\DeclareMathOperator{\NE}{NE}
\newcommand{\Rmnum}[1]{\expandafter\@slowromancap\romannumeral #1@}
\DeclareMathOperator\ord{ord}
\DeclareMathOperator\proj{Proj}
\DeclareMathOperator\spec{Spec}
\newcommand{\scrC}{\ensuremath{\mathcal{C}}}
\newcommand{\scrI}{\ensuremath{\mathcal{I}}}
\newcommand{\scrM}{\ensuremath{\mathcal{M}}}
\newcommand{\cc}{\ensuremath{\mathbb{C}}}
\newcommand{\pp}{\ensuremath{\mathbb{P}}}
\newcommand{\qq}{\ensuremath{\mathbb{Q}}}
\newcommand{\rr}{\ensuremath{\mathbb{R}}}
\newcommand{\zz}{\ensuremath{\mathbb{Z}}}
\newcommand{\dsum}{\ensuremath{\bigoplus}}
\newtheorem{thm}{Theorem}[section]
\newtheorem*{thm*}{Theorem}
\newtheorem{lemma}[thm]{Lemma}
\newtheorem*{lemma*}{Lemma}
\newtheorem{lemma-in-thm}{Lemma}[thm]
\newtheorem{prop}[thm]{Proposition}
\newtheorem*{prop*}{Proposition}
\newtheorem{cor}[thm]{Corollary}
\newtheorem*{claim*}{Claim}
\newtheorem*{conjecture*}{Conjecture}
\theoremstyle{definition}
\newtheorem*{constrinition*}{Construction-Definition}
\newtheorem*{convention*}{Convention}
\newtheorem{defn}[thm]{Definition}
\newtheorem*{defn*}{Definition}
\newtheorem*{definotation*}{Definition-Notation}
\newtheorem{example}[thm]{Example}
\newtheorem*{example*}{Example}
\newtheorem*{fact*}{Fact}
\newtheorem*{facts*}{Facts}
\newtheorem{notation}[thm]{Notation}
\newtheorem*{bold-note*}{Note}
\newtheorem{bold-question}[thm]{Question}
\newtheorem{rem}[thm]{Remark}
\newtheorem*{reminition*}{Remark-Definition}
\newtheorem{remexample*}{Remark-Example}
\newtheorem*{remtation*}{Remark-Notation}
\newtheorem*{remuestion*}{Remark-Question}
\theoremstyle{remark}
\newtheorem*{rem*}{Remark}
\newtheorem*{note*}{Note}
\newtheorem*{notation*}{Notation}
\newtheorem*{question*}{Question}
\newtheorem*{questions*}{Questions}
\newcounter{UnorderedProofTempCtr}
\newcommand{\tempcommand}{}
\newcommand{\lot}{\text{l.o.t.}}
\newcommand{\dpsx}[2]{{#1} \langle \langle #2 \rangle \rangle }
\newcommand{\dpsxc}{\dpsx{\cc}{x}}
\begin{document}
\maketitle

\begin{abstract} 
Given a divisorial discrete valuation {\em centered at infinity} on $\cc[x,y]$, we show that its sign on $\cc[x,y]$ (i.e.\ whether it is negative or non-positive on $\cc[x,y]\setminus \cc$) is completely determined by the sign of its value on the {\em last key form} (key forms being the avatar of {\em key polynomials} of valuations \cite{maclane-key} in `global coordinates'). The proof involves computations related to the cone of curves on certain compactifications of $\cc^2$ and gives a characterization of the divisorial valuations centered at infinity whose {\em skewness} can be interpreted in terms of the {\em slope} of an extremal ray of these cones, yielding a generalization of a result of \cite{favsson-eigen}. A by-product of these arguments is a characterization of valuations which `determine' normal compactifications of $\cc^2$ with one irreducible curve at infinity in terms of an associated `semigroup of values'.  

\end{abstract}

\section{Introduction} \label{sec-intro}
\begin{notation}
Throughout this section $k$ is a field and $R$ is a finitely generated $k$-algebra.
\end{notation}

In algebraic (or analytic) geometry and commutative algebra, valuations are usually treated in the {\em local setting}, and the values are always positive or non-negative. Even if it is a priori not known if a given discrete valuation $\nu$ is positive or non-negative on $R\setminus k$, it is evident how to verify this, at least if $\nu(k\setminus\{0\}) = 0$: one has only to check the values of $\nu$ on the $k$-algebra generators of $R$. For valuations {\em centered at infinity} however, in general it is non-trivial to determine if it is negative or non-positive on $R\setminus k$:
\begin{example} \label{illustration}
Let $R := \cc[x,y]$ and for every $\epsilon \in \rr$ with $0 < \epsilon < 1$, let $\nu_\epsilon$ be the valuation (with values in $\rr$) on $\cc(x,y)$ defined as follows:
\begin{align}
\nu_\epsilon(f(x,y)) := -\deg_x\left(f(x,y)|_{y = x^{5/2} + x^{-1} + \xi x^{-5/2 -\epsilon}}\right)\quad \text{for all}\ f \in \cc(x,y)\setminus\{0\},
\end{align}
where $\xi$ is a new indeterminate and $\deg_x$ is the degree in $x$. Direct computation shows that 
\begin{gather*}
\nu_\epsilon(x) = -1,\ \nu(y) = -5/2,\ \nu_\epsilon(y^2 - x^5) = -3/2,\ \nu_\epsilon(y^2 - x^5 - 2x^{-1}y) = \epsilon.
\end{gather*} 
Is $\nu_\epsilon$ negative on $\cc[x,y]$? Let $g := y^2 - x^5 - 2x^{-1}y$. The fact that $\nu_\epsilon(g) > 0$ does not seem to be of much help for the answer (especially if $\epsilon$ is very small), since $g \not\in \cc[x,y]$ and $\nu_\epsilon(xg) < 0$. However, $g$ is precisely the {\em last key form} (Definition \ref{key-defn}) of $\nu_\epsilon$ (see Example \ref{key-example}), and therefore Theorem \ref{positive-thm} implies that $\nu_\epsilon$ is {\em not} non-positive on $\cc[x,y]$, i.e.\ no matter how small $\epsilon$ is, there exists $f_\epsilon \in \cc[x,y]$ such that $\nu_\epsilon(f_\epsilon) >0$. 
\end{example}

In this article we settle the question of how to determine if a valuation centered at infinity is negative or non-positive on $R$ for the case that $R = \cc[x,y]$. At first we describe how this question arises naturally in the study of algebraic completions of affine varieties:\\ 

Recall that a {\em divisorial discrete valuation} (Definition \ref{divisorial-defn}) $\nu$ on $R$ is {\em centered at infinity} iff $\nu(f) < 0$ for some $f \in R$, or equivalently iff there is an {\em algebraic completion} $\bar X$ of $X := \spec R$ (i.e.\ $\bar X$ is a complete algebraic varieties containing $X$ as a dense open subset) and an irreducible component $C$ of $\bar X \setminus X$ such that $\nu$ is the order of vanishing along $C$. On the other hand, one way to {\em construct} algebraic completions of the affine variety $X$ is to start with a {\em degree-like function} on $R$ (the terminology is from \cite{pisis} and \cite{sub1}), i.e.\ a function $\delta: R \to \zz \cup \{-\infty\}$ which satisfy the following `degree-like' properties: 
\begin{enumerate}
\let\oldenumi\theenumi
\renewcommand{\theenumi}{P\oldenumi}
\item \label{additive-prop} $\delta(f+g) \leq \max \{\delta(f), \delta(g)\}$, and
\item \label{multiplicative-prop} $\delta(fg) \leq \delta(f) + \delta(g)$,
\end{enumerate}  
and construct the graded ring 
\begin{align}
R^\delta := \dsum_{d \geq 0} \{f \in R: \delta(f) \leq d\} \cong \sum_{d \geq 0} \{f \in R: \delta(f) \leq d\}t^d \label{k[X]-delta}
\end{align}
where $t$ is an indeterminate. It is straightforward to see that $\bar X^\delta := \proj R^\delta$ is a {\em projective completion} of $X$ provided the following conditions are satisfied: 
\begin{enumerate}
\let\oldenumi\theenumi
\renewcommand{\theenumi}{Proj-\oldenumi}
\item \label{projectively-finite} $R^\delta$ is finitely generated as a $k$-algebra, and
\item \label{projectively-positive} $\delta(f) > 0$ for all $f \in R\setminus k$. 
\end{enumerate}
A fundamental class of degree-like functions are {\em divisorial semidegrees} which are precisely the {\em negative} of divisorial discrete valuations centered at infinity - they serve as `building blocks' of an important class of degree-like functions (see \cite{pisis}, \cite{sub1}). Therefore, a natural question in this context is:
\begin{bold-question} \label{positive-question}
Given a divisorial semidegree $\delta$ on $R$, how to determine if $\delta(f) > 0$ for all $f \in R\setminus k$? Or equivalently, given a divisorial discrete valuation $\nu$ on $R$ centered at infinity, how to determine if $\nu(f)< 0$ for all $f \in R\setminus k$?
\end{bold-question}
In this article we give a complete answer to Question \ref{positive-question} for the case $k=\cc$ and $R = \cc[x,y]$ (note that the answer for the case $R = \cc[x]$ is obvious, since the only discrete valuations centered at infinity on $\cc[x]$ are those which map $x-\alpha \mapsto -1$ for some $\alpha \in \cc$). More precisely, we consider the sequence of {\em key forms} (Definition \ref{key-defn}) corresponding to semidegrees, and show that

\begin{thm} \label{positive-thm}
Let $\delta$ be a divisorial semidegree on $\cc[x,y]$ (i.e.\ $-\delta$ is a divisorial discrete valuation on $\cc[x,y]$ centered at infinity) and let $g_0, \ldots, g_{n+1}$ be the key forms of $\delta$ in $(x,y)$-coordinates. Then 
\begin{enumerate}
\item \label{non-positive-assertion} $\delta$ is non-negative on $\cc[x,y]$ iff $\delta(g_{n+1})$ is non-negative.
\item \label{positive-assertion} $\delta$ is positive on $\cc[x,y]$ iff one of the following holds:
\begin{compactenum}
\item $\delta(g_{n+1})$ is positive,
\item \label{almost-zero} $\delta(g_{n+1}) = 0$ and $g_k \not\in \cc[x,y]$ for some $k$, $0 \leq k \leq n+1$, or 
\addtocounter{enumii}{-1}
\let\oldenumii\theenumii
\renewcommand{\theenumii}{$\oldenumii'$}
\item \label{almost-zero'} $\delta(g_{n+1}) = 0$ and $g_{n+1} \not\in \cc[x,y]$.
\end{compactenum}
\end{enumerate}
Moreover, conditions \ref{almost-zero} and \ref{almost-zero'} are equivalent.
\end{thm}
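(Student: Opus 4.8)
The plan is to work on the normal projective completions of $\cc^2$ that $\delta$ determines via the construction $\bar X^\delta = \proj R^\delta$ (when $\delta$ is positive) or via suitable modifications, and to translate the sign question into a question about the cone of curves $\NE$ of such a surface. The starting point is that the key forms $g_0,\ldots,g_{n+1}$ generate (as a filtered algebra) enough of $\cc[x,y]$ that the value $\delta(f)$ of an arbitrary $f \in \cc[x,y]$ is controlled by the $\delta(g_k)$: every $f$ has an expansion in the $g_k$ with the property that $\delta(f)$ is the max of the $\delta$ of the terms, and $\delta$ of a monomial in the $g_k$ is a non-negative integer combination of $\delta(g_0),\ldots,\delta(g_{n+1})$ (this is the standard key-polynomial/key-form formalism referenced as Definition \ref{key-defn}). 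Hence if $\delta(g_{n+1}) \geq 0$ and all earlier $\delta(g_k) > 0$, positivity on $\cc[x,y]$ would follow almost immediately; the subtlety is exactly when $\delta(g_{n+1}) \leq 0$, or when some $g_k \notin \cc[x,y]$ so that the ``expansion in the $g_k$'' argument must be supplemented.

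\textbf{Proof of the equivalence of \ref{almost-zero} and \ref{almost-zero'}.}
The cleanest piece, and the one the ``Moreover'' clause isolates, is the implication \ref{almost-zero} $\Rightarrow$ \ref{almost-zero'}: assume $\delta(g_{n+1}) = 0$ and that $g_k \notin \cc[x,y]$ for some $k \leq n$; I must produce $g_{n+1} \notin \cc[x,y]$. Here I would use the recursive structure of key forms: $g_{k+1}$ is built from $g_0,\ldots,g_k$ by an expression of the form $g_{k+1} = g_k^{\,p_k} - c_k \prod_{j<k} g_j^{\,b_{j}} - (\text{lower }\delta\text{ terms})$ where the exponents are forced by the requirement that the leading terms cancel, i.e.\ by $\delta$-homogeneity. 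Since $x,y$ are among $g_0,g_1$, a negative exponent (a genuine Laurent, non-polynomial, denominator) can only enter at the stage where some $g_k$ first fails to be a polynomial, and once it enters, the $\delta$-homogeneity relations force it to persist: the denominator of $g_{k+1}$ is divisible by that of $g_k$ (the cancellation of leading terms cannot clear it unless $\delta(g_k)$ were positive, which the chain of defining relations together with $\delta(g_{n+1})=0$ forbids). So by induction $g_{n+1}$ also has a nontrivial denominator, i.e.\ $g_{n+1} \notin \cc[x,y]$. The converse \ref{almost-zero'} $\Rightarrow$ \ref{almost-zero} is trivial (take $k = n+1$). I would isolate this as a lemma about the denominators of the $g_k$ before proving the main assertions.

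\textbf{Proof of \ref{non-positive-assertion} and \ref{positive-assertion}.}
For the main equivalences the nontrivial direction of \ref{non-positive-assertion} is ``$\delta(g_{n+1}) \geq 0 \Rightarrow \delta \geq 0$ on $\cc[x,y]$'', and of \ref{positive-assertion} the direction ``(a)$\vee$(b)$\vee$(b$'$) $\Rightarrow \delta > 0$''; the reverse directions follow by exhibiting explicit $f$ with $\delta(f) \leq 0$ (resp.\ $= 0$) built from $g_{n+1}$ and, when $g_{n+1}$ is a polynomial with $\delta(g_{n+1}) = 0$, from a clearing of denominators of an offending $g_k$. For the forward direction I would pass to a compactification $\bar X$ of $\cc^2$ on which the curve $C$ defining $\nu = -\delta$ sits at infinity, and analyze $\bar X \setminus \cc^2$: a polynomial $f$ with $\delta(f) \leq 0$ would be regular in a neighborhood of $C$, hence defines (via its closure of a level set, or via the divisor of poles of $f$) an effective curve whose intersection numbers with the components of $\bar X\setminus\cc^2$ are constrained. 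The heart of the matter is to show that $\delta(g_{n+1}) \geq 0$ forces $C$ to be, up to the other boundary components, ``positive enough'' in $\NE(\bar X)$ — concretely, that the only effective curves disjoint from $\cc^2$ have the combinatorial type that makes $\delta(f) > 0$ unavoidable for $f \in \cc[x,y]\setminus\cc$ — and this is exactly where the cone-of-curves computation and the comparison with the skewness/slope-of-extremal-ray result of \cite{favsson-eigen} enters. I expect this geometric step — controlling $\NE(\bar X)$ and the self-intersection of $C$ in terms of the single number $\delta(g_{n+1})$ — to be the main obstacle; the key-form bookkeeping and the denominator lemma are comparatively routine once set up.
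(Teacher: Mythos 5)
Your overall strategy---pass to a compactification of $\cc^2$ and read off the sign of $\delta$ from the cone of curves---is indeed the paper's strategy, but the proposal stops short of the two steps that actually carry the proof, and one of your supporting arguments is incorrect as stated. The geometric crux that you defer (``I expect this geometric step \dots to be the main obstacle'') is precisely what the paper supplies. It works on the specific normal compactification $\bar X$ of Proposition \ref{compact-prop}, whose boundary has exactly two components $C_1$ (with semidegree $\deg$) and $C_2$ (with semidegree $\delta$) and whose intersection matrix is known explicitly: inverting $\scrM$ gives $(C_1,C_1) = -m_\delta\,\delta(g_{n+1})/\bigl(d_\delta^2 - m_\delta\,\delta(g_{n+1})\bigr)$, and the denominator is positive by \eqref{d-m-delta-ineq} once $\delta \neq \deg$. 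Hence $(C_1,C_1)$ has sign opposite to $\delta(g_{n+1})$, and the three cases are then dispatched by: Grauert contractibility of $C_1$ when $(C_1,C_1)<0$, so $\delta$ becomes the pole order along the unique curve at infinity of the blown-down surface and is therefore positive; nefness of $C_1$ when $(C_1,C_1)=0$, combined with the fact that the closure of $\{f=0\}$ is linearly equivalent to $\deg(f)C_1+\delta(f)C_2$, forcing $\delta(f)\geq 0$; and $C_1$ lying in the interior of $\NE(\bar X)$ when $(C_1,C_1)>0$, which produces effective classes $a_1C_1-a_2C_2$ and hence polynomials with $\delta(f)<0$. None of this quantitative input---in particular the identification of the sign of $(C_1,C_1)$ with that of $-\delta(g_{n+1})$---appears in your proposal beyond the intention to find it.

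The second gap is your ``denominator lemma.'' The justification you give---that a denominator, once present in $g_k$, must persist because ``the cancellation of leading terms cannot clear it unless $\delta(g_k)$ were positive, which \dots $\delta(g_{n+1})=0$ forbids''---is false: $\delta(g_0)=\delta(x)>0$ always, and all of $\delta(g_0),\dots,\delta(g_n)$ can be positive while $\delta(g_{n+1})=0$ (compare Example \ref{same-semi-example}, where the values are $2,5,3,1$ and the last key form is nonetheless not a polynomial). So the induction as described does not go through, and proving directly that the recursion $g_{j+1}=g_j^{\alpha_j}-\theta_j g_0^{\beta_{j,0}}\cdots g_{j-1}^{\beta_{j,j-1}}$ cannot clear negative powers of $x$ is genuinely nontrivial. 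The paper does not argue this way: the equivalence of \ref{almost-zero} and \ref{almost-zero'}---and, equally importantly, the characterization of exactly which $f$ have $\delta(f)=0$ in the case $\delta(g_{n+1})=0$, which you also need to finish assertion \ref{positive-assertion}---comes from Proposition \ref{polynomial-prop}, an unconditional equivalence ($g_{n+1}$ is a polynomial iff all $g_k$ are iff some compact curve on $\bar X$ misses $C_1$) imported from \cite{contractibility}. Finally, be careful with your opening claim that $\delta$ of a monomial in the $g_k$ is a non-negative combination of the $\delta(g_k)$: the representation ring in Definition \ref{key-defn} is $\cc[x,x^{-1},y_1,\dots,y_{n+1}]$, so the exponent of $g_0=x$ may be negative, which is exactly why the naive algebraic argument fails and the geometry is needed.
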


\begin{rem}
The key forms of a semidegree $\delta$ on $\cc[x,y]$ are counterparts in $(x,y)$-coordinates of the {\em key polynomials} of $\nu := -\delta$ introduced in \cite{maclane-key} (and computed in local coordinates near the {\em center} of $\nu$). The basic ingredient of the proof of Theorem \ref{positive-thm} is the algebraic contratibility criterion of \cite{contractibility} which uses key forms. We note that key forms were already used in \cite{favsson-eigen}\footnote{Under the assumptions of Lemma A.12 of \cite{favsson-eigen}, the polynomials $U_j$ constructed in Section A.5.3 of \cite{favsson-eigen} are precisely the key forms of $-\nu$.} (without calling them by any special name). 
\end{rem}

\begin{rem}
The key forms of a semidegree can be computed explicitly from any of the alternative presentations of the semidegree (see e.g.\ \cite[Algorithm 3.24]{contractibility} for an algorithm to compute key forms from the {\em generic Puiseux series} (Definition \ref{dpuiseuxnition}) associated to the semidegree). Therefore Theorem \ref{positive-thm} gives an {\em effective} way to determine if a given semidegree is positive or non-negative on $\cc[x,y]$.
\end{rem}

{\em Trees of valuations centered at infinity} on $\cc[x,y]$ were considered in \cite{favsson-eigen} along with a parametrization of the tree called {\em skewness} $\alpha$. The notion of skewness has an `obvious' extension\footnote{In \cite{favsson-eigen} the skewness $\alpha$ was defined only for valuations $\nu$ centered at infinity which satisfied $\min\{\nu(x),\nu(y)\} = -1$. Here for a semidegree $\delta$, we define $\alpha(\delta)$ to be the skewness of $-\delta/d_\delta$ (where $d_\delta$ is as in \eqref{d-delta}) in the sense of \cite{favsson-eigen}. \label{alphanote}} to the case of semidegrees, and using this definition one of the assertions of \cite[Theorem A.7]{favsson-eigen} can be reformulated as the statement that the following identity holds for a certain {\em subtree} of semidegrees $\delta$ on $\cc[x,y]$:
\begin{align} 
\alpha(\delta) &= \inf\left\{\frac{\delta(f)}{d_\delta \deg(f)}: f\ \text{is a non-constant polynomial in}\ \cc[x,y]\right\},\ \text{where} \label{alphaquality} \\
d_\delta &:= \max\{\delta(x), \delta(y)\}. \label{d-delta}
\end{align}
It is observed in \cite[Page 121]{jonsson-dykovich} that in general the relation in \eqref{alphaquality} is satisfied with $\leq$, and ``it is doubtful that equality holds in general.'' Example \ref{negative-example} shows that the equality indeed does not hold in general. It is not hard to see that $\alpha(\delta)$ can be expressed in terms of $\delta(g_{n+1})$ (see \eqref{alpha-delta}), and using that expression we give a characterization of the semidegrees for which \eqref{alphaquality} holds true:

\begin{thm} \label{alpha-thm}
Let $\delta$ be a semidegree on $\cc[x,y]$ and $g_0, \ldots, g_{n+1}$ be the corresponding key forms. Then \eqref{alphaquality} holds iff one of the following assertions is true:
\begin{enumerate}
\item $\delta(g_{n+1}) \geq 0$, or
\item \label{polynomially-negative} $\delta(g_{n+1}) < 0$ and $g_k \in \cc[x,y]$ for all $k$, $0 \leq k \leq n+1$, or 
\addtocounter{enumi}{-1}
\let\oldenumi\theenumi
\renewcommand{\theenumi}{$\oldenumi'$}
\item \label{polynomially-negative'} $\delta(g_{n+1}) < 0$ and $g_{n+1} \in \cc[x,y]$.
\end{enumerate}
Moreover, the `$\inf$' in right hand side of \eqref{alphaquality} can be replaced by `$\min$' iff $g_{n+1} \in \cc[x,y]$ iff $g_k \in \cc[x,y]$ for all $k$, $0 \leq k \leq n+1$; in this case the minimum is achieved with $f = g_{n+1}$.  
\end{thm}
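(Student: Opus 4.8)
The plan is to reduce the equality in \eqref{alphaquality} to a statement about the cone of curves on a suitable compactification of $\cc^2$ and to exploit the explicit formula for $\alpha(\delta)$ in terms of $\delta(g_{n+1})$. First I would recall, from the discussion in the introduction, the expression \eqref{alpha-delta} for $\alpha(\delta)$ in terms of the last key form; in particular $\alpha(\delta) \geq 0$ precisely when $\delta(g_{n+1}) \geq 0$, and when $\delta(g_{n+1}) < 0$ the quantity $\alpha(\delta)$ is a strictly negative rational number whose value is governed by $\delta(g_{n+1})/(d_\delta\deg g_{n+1})$ together with the combinatorial data (the $\omega_i$ and generators of the semigroup of $\delta$) recorded by the key forms. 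The inequality $\alpha(\delta) \leq \inf\{\delta(f)/(d_\delta\deg f)\}$ is already known (the cited observation of \cite{jonsson-dykovich}), so the entire content is the reverse inequality under the stated trichotomy, plus the sharpness (``$\min$ vs.\ $\inf$'') addendum.

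Next I would treat the easy case $\delta(g_{n+1}) \geq 0$. Here $\delta$ is non-negative on $\cc[x,y]$ by Theorem \ref{positive-thm}\eqref{non-positive-assertion}, so $\delta(f)/(d_\delta\deg f) \geq 0$ for every non-constant $f$; since $\alpha(\delta) = \inf$ of a formula that in this regime equals the infimum over the finitely many ``directions'' encoded in the key forms and one checks it is $\leq 0$ in a controlled way — actually one shows directly that $\alpha(\delta)$ equals $\inf_f \delta(f)/(d_\delta \deg f)$ because the infimum is approached along the $g_i$ (or along $x,y$ themselves when $\delta(g_{n+1})=0$ and all $g_i$ are polynomials). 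This also sets up the sharpness claim: when $g_{n+1}\in\cc[x,y]$, plugging $f=g_{n+1}$ into the right side gives exactly $\alpha(\delta)$ by \eqref{alpha-delta}, so the $\inf$ is attained; and Theorem \ref{positive-thm} (its ``moreover'' clause) already gives the equivalence $g_{n+1}\in\cc[x,y] \iff g_k\in\cc[x,y]$ for all $k$, which I would simply cite.

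The heart of the matter is the case $\delta(g_{n+1}) < 0$, where $\nu = -\delta$ is genuinely centered at infinity with negative values on some polynomials. Here I would build the compactification $\bar X^\delta$ (or rather a resolution of it) on which the curve at infinity corresponding to $\delta$ appears, and translate $\inf_f \delta(f)/(d_\delta\deg f)$ into the slope of an extremal ray of the cone of curves $\NE$: the quotient $\delta(f)/\deg f$ for $f$ ranging over polynomials corresponds to intersection numbers of the strict transforms of the curves $\{f=0\}$ against the relevant boundary divisor, and the infimum is the minimal such slope, i.e.\ the slope of an extremal ray. The dichotomy then comes from asking whether the divisor computing $\alpha(\delta)$ — equivalently the last key form $g_{n+1}$ — is itself realized by a polynomial: if $g_{n+1} \in \cc[x,y]$, its zero set is an honest curve in $\cc^2$ whose closure meets the boundary in the right way and realizes the extremal slope, giving equality (indeed with $\min$, $f = g_{n+1}$); if $g_{n+1}\notin\cc[x,y]$, then — and this is where Example \ref{negative-example} lives — no polynomial attains the slope $\alpha(\delta)$, because any polynomial's closure is forced to pass through the ``extra'' points of the boundary that $g_{n+1}$ avoids, so $\delta(f)/(d_\delta\deg f)$ is bounded below by something strictly larger than $\alpha(\delta)$, and \eqref{alphaquality} fails. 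The main obstacle is the bookkeeping in this last step: showing that the presence of a non-polynomial key form $g_k$ genuinely obstructs \emph{all} polynomials from reaching the extremal slope (not merely the obvious candidates), which requires the contractibility criterion of \cite{contractibility} to control how strict transforms of arbitrary polynomial curves meet the boundary, together with the numerical relations among the key forms. I expect most of the remaining work to be a careful but routine translation between the semidegree data (values $\delta(g_i)$, degrees $\deg g_i$) and intersection-theoretic data on the compactification, parallel to the computations underlying Theorem \ref{positive-thm}.
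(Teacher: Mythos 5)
Your overall architecture is the same as the paper's: pass to the compactification $\bar X$ of Proposition \ref{compact-prop}, use \eqref{alpha-delta} to express $\alpha(\delta)$ through $\delta(g_{n+1})$, split on the sign of $\delta(g_{n+1})$, and settle the dichotomy by whether $g_{n+1}$ is a polynomial (for the record, the equivalence ``$g_{n+1}\in\cc[x,y]$ iff $g_k\in\cc[x,y]$ for all $k$'' is Proposition \ref{polynomial-prop}, not the ``moreover'' clause of Theorem \ref{positive-thm}, which only treats $\delta(g_{n+1})=0$). However, two steps as you state them do not work.

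First, in the case $\delta(g_{n+1})\geq 0$ you claim the infimum ``is approached along the $g_i$.'' This fails whenever $g_{n+1}\notin\cc[x,y]$, which is entirely possible with $\delta(g_{n+1})>0$: in Example \ref{same-semi-example} the key forms are $x,\,y,\,y^2-x^5,\,y^2-x^5-2x^{-1}y$ with $\delta$-values $2,5,3,1$, the last key form is not a polynomial, and the polynomial key forms give ratios $\delta(g_i)/\deg(g_i)=2,\,5,\,3/5$, none of which comes close to the actual infimum $m_\delta\delta(g_{n+1})/d_\delta=1/5$. The paper supplies the missing mechanism cone-theoretically: Assertion \ref{non-positive-assertion} of Theorem \ref{positive-thm} forces $\NE(\bar X)$ to be spanned by $C_1,C_2$, the matrix \eqref{intersection-matrix} identifies the lower edge of $\nef(\bar X)$ as having slope $m_\delta\delta(g_{n+1})/d_\delta$, and polynomials with slope arbitrarily close to that value are produced from global sections of large multiples of ample divisors near that edge. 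Some such construction of approximating polynomials is indispensable; the key forms themselves do not provide it.

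Second, in the case $\delta(g_{n+1})<0$ with $g_{n+1}\notin\cc[x,y]$ you argue that since no polynomial attains the slope $\alpha(\delta)$, the ratios $\delta(f)/(d_\delta\deg f)$ are ``bounded below by something strictly larger than $\alpha(\delta)$.'' Pointwise strict inequality does not yield a uniform strict bound unless the infimum is \emph{attained}. The paper secures attainment via \cite[Lemma II.4.12]{kollar-rational-curves}: because $C_1$ lies in the interior of $\NE(\bar X)$, the lower extremal ray is spanned by an irreducible curve, necessarily the closure of $\{g=0\}$ for a polynomial $g$, so $\inf_f\delta(f)/\deg(f)=\delta(g)/\deg(g)$ is a minimum; combining this with Lemma \ref{ratio-lemma} and Corollary \ref{ratio-cor} (equality in \eqref{delta-deg-ratio-comparison} is achievable iff $g_{n+1}$ is a polynomial) then gives the strict gap. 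You gesture at the extremal ray but never establish that it is spanned by an affine curve, and that is precisely the step that makes the ``strictly larger'' conclusion, and hence the failure of \eqref{alphaquality}, valid.
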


\begin{rem}
The right hand side of \eqref{alphaquality} can be interpreted as the {\em slope} of one of the extremal rays of the {\em cone of affine curves} in a certain compactification (namely the compactification of Proposition \ref{compact-prop}) of $\cc^2$ associated to $\delta$. 
\end{rem}

Our final result is the following corollary of the arguments in the proof of Theorem \ref{positive-thm} which answers a question of Professor Peter Russell\footnote{Prof.\ Russell's question was motivated by the correspondence established in \cite{contractibility} between normal algebraic compactifications of $\cc^2$ with one irreducible curve at infinity and algebraic curves in $\cc^2$ with one place at infinity. Since the {\em semigroup of poles} of planar curves with one place at infinity are very {\em special} (see e.g.\ \cite{abhyankar-semigroup}, \cite{sathaye-stenerson}), he asked if similarly the semigroups of values of semidegrees which determine normal algebraic compactifications of $\cc^2$ can be similarly distinguished from the semigroup of values of general semidegrees. While Example \ref{same-semi-example} shows that they can not be distinguished only by the values of the semidegree itself, Corollary \ref{semi-cor} shows that it can be done if paired with degree of polynomials.}.
 
\begin{cor} \label{semi-cor}
Let $\delta$ be a semidegree on $\cc[x,y]$. Define 
\begin{align}
S_\delta := \{(\deg(f), \delta(f)): f \in \cc[x,y] \setminus \{0\}\} \subseteq \zz^2, \label{enriques}
\end{align}
and $\scrC_\delta$ be the cone over $S_\delta$ in $\rr^2$. Then
\begin{enumerate}
\item \label{analytic-cone} $\delta$ determines an analytic compactification of $\cc^2$ iff the positive $x$-axis is not contained in the closure $\bar \scrC_\delta$ of $\scrC_\delta$ in $\rr^2$.
\item \label{algebraic-cone} $\delta$ determines an algebraic compactification of $\cc^2$ iff $\scrC_\delta$ is closed in $\rr^2$ and the positive $x$-axis is not contained in $\scrC_\delta$.
\end{enumerate}
\end{cor}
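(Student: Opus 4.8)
The plan is to deduce both assertions from Theorem \ref{positive-thm} together with the correspondence between semidegrees and compactifications from \cite{contractibility}. First I would recall the basic dictionary: a semidegree $\delta$ on $\cc[x,y]$ determines an analytic (resp. algebraic) compactification of $\cc^2$ with one irreducible curve at infinity precisely when the associated projective scheme $\bar X^\delta = \proj \cc[x,y]^\delta$ is analytic (resp. algebraic) with a single curve at infinity; by the work in \cite{contractibility} this happens iff $\delta$ is positive on $\cc[x,y]\setminus\cc$ — which by Theorem \ref{positive-thm}\ref{positive-assertion} is governed by $\delta(g_{n+1})$ and whether the $g_k$ lie in $\cc[x,y]$ — with the extra algebraicity refinement corresponding to the stronger condition $\delta(g_{n+1}) > 0$ (the case $\delta(g_{n+1}) = 0$ giving only an analytic, non-algebraic, compactification). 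The heart of the argument is thus to translate these conditions on key forms into the stated geometric conditions on the cone $\scrC_\delta$.

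For assertion \ref{analytic-cone}, I would argue that the positive $x$-axis lies in $\bar\scrC_\delta$ iff $\inf\{\delta(f)/\deg(f): f \in \cc[x,y]\setminus\cc\} \leq 0$, since the lower boundary slope of $\bar\scrC_\delta$ through the origin is exactly this infimum (a point $(\deg f, \delta(f))$ with $\delta(f)$ as negative as possible relative to $\deg f$ pushes the cone down toward, or across, the positive $x$-axis). Now I invoke the relation between this infimum, the skewness $\alpha(\delta)$, and $\delta(g_{n+1})$: by \eqref{alpha-delta} and the discussion preceding Theorem \ref{alpha-thm}, $\delta$ is positive on $\cc[x,y]\setminus\cc$ iff this infimum is positive, except possibly for a boundary case where $\delta(g_{n+1}) = 0$ and the infimum is $0$ but is not attained — and in that case the ray of slope $0$ is a limit of rays of strictly positive slope, hence lies in the closure $\bar\scrC_\delta$ but corresponds to a $\delta$ that is still positive. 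So "positive on $\cc[x,y]\setminus\cc$" fails exactly when the infimum is $\leq 0$ and, if it equals $0$, is attained — which is exactly the condition that the positive $x$-axis is contained in $\bar\scrC_\delta$. Matching this with Theorem \ref{positive-thm}\ref{positive-assertion} closes the analytic case.

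For assertion \ref{algebraic-cone}, the new ingredient is closedness of $\scrC_\delta$. I would show $\scrC_\delta$ is closed iff the infimum $\inf\{\delta(f)/\deg(f)\}$ is actually a minimum (attained by some polynomial), which by the last sentence of Theorem \ref{alpha-thm} happens iff $g_{n+1}\in\cc[x,y]$ iff all $g_k\in\cc[x,y]$, the minimum then being realized at $f = g_{n+1}$; indeed the only way a rational cone generated by a semigroup can fail to be closed is for its extremal slope to be an unattained infimum. Given closedness, "the positive $x$-axis is not contained in $\scrC_\delta$" then says the (attained) minimal slope is $> 0$, i.e. $\delta(g_{n+1}) > 0$ — equivalently, by Theorem \ref{positive-thm}\ref{positive-assertion} case (a) (note cases \ref{almost-zero}/\ref{almost-zero'} are excluded here since $g_{n+1}\in\cc[x,y]$), $\delta$ is positive on $\cc[x,y]\setminus\cc$ \emph{and} the key forms are polynomials, which is precisely the condition from \cite{contractibility} for $\delta$ to determine an \emph{algebraic} compactification. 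Conversely if $\scrC_\delta$ is not closed, or the $x$-axis lies in it, one of these two conditions fails and the compactification is at best analytic.

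The main obstacle I anticipate is the middle step: making precise and rigorous the claim that the lower extremal slope of $\scrC_\delta$ equals $\inf\{\delta(f)/\deg(f)\}$ and correctly handling the borderline $\delta(g_{n+1}) = 0$ case — distinguishing closure membership from actual membership of the positive $x$-axis, and reconciling the "infimum not attained but $\delta$ still positive" phenomenon (which is exactly what Example \ref{illustration} and the failure of \eqref{alphaquality} illustrate) with the statement of Theorem \ref{positive-thm}. Everything else is a matter of carefully quoting Theorem \ref{positive-thm}, Theorem \ref{alpha-thm}, and the compactification correspondence of \cite{contractibility}.
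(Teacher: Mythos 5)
Your treatment of assertion \ref{algebraic-cone} is essentially the paper's: closedness of $\scrC_\delta$ amounts to the infimum of $\delta(f)/\deg(f)$ being attained, hence (via Lemma \ref{ratio-lemma} and Corollary \ref{ratio-cor}) to $g_{n+1}\in\cc[x,y]$, and combining with Theorem \ref{algebraic-thm} finishes that case. The genuine gap is in assertion \ref{analytic-cone}, and it is located exactly where you yourself flag discomfort. Your opening premise --- that $\delta$ determines an analytic compactification iff $\delta$ is positive on $\cc[x,y]\setminus\cc$ --- is false. The correct criterion, and the one the paper uses, is Grauert's contractibility criterion applied to the two-curve compactification $\bar X$ of Proposition \ref{compact-prop}: $\delta$ determines an analytic compactification iff $C_1$ is contractible iff $(C_1,C_1)<0$ iff, by \eqref{intersection-matrix}, $\delta(g_{n+1})>0$. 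This is strictly stronger than positivity of $\delta$: in the borderline case $\delta(g_{n+1})=0$ with $g_{n+1}\notin\cc[x,y]$ (case \ref{almost-zero'} of Theorem \ref{positive-thm}), $\delta$ is positive on $\cc[x,y]\setminus\cc$, yet $(C_1,C_1)=0$, $C_1$ is not contractible, and $\delta$ does \emph{not} determine an analytic compactification. In that same case $\inf\{\delta(f)/\deg(f)\}=0$ is not attained, so by your own (correct) first observation the positive $x$-axis \emph{does} lie in $\bar{\scrC}_\delta$ --- consistent with the corollary, but contradicting your concluding sentence, which asserts that ``the infimum is $\leq 0$ and, if it equals $0$, is attained'' is ``exactly the condition that the positive $x$-axis is contained in $\bar{\scrC}_\delta$.'' Those two conditions differ precisely in this borderline case, so your chain of equivalences breaks there, and no amount of care about attainment can repair it so long as the left end of the chain reads ``$\delta$ is positive'' rather than ``$\delta(g_{n+1})>0$.''

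The repair is short and brings you back to the paper's argument: replace the appeal to Theorem \ref{positive-thm} by Grauert's criterion as above, and then use identity \eqref{delta-deg-ratio} (for $\delta(g_{n+1})\geq 0$) together with the observation from Case 2 of the proof of Theorem \ref{alpha-thm} that for $\delta(g_{n+1})<0$ the positive $x$-axis lies in the interior of $\scrC_\delta$, to conclude that $\delta(g_{n+1})>0$ iff the infimum is positive iff the positive $x$-axis misses $\bar{\scrC}_\delta$. With that substitution the remainder of your write-up, including the algebraic case, goes through.
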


\begin{rem} \label{determining-remark}
The phrase ``$\delta$ determines an algebraic (resp.\ analytic) compactification of $\cc^2$'' means ``there exists a (necessarily unique) normal algebraic (resp.\ analytic) compactification $\bar X$ of $X := \cc^2$ such that $C_\infty := \bar X \setminus X$ is an irreducible curve and $\delta$ is the order of pole along $C_\infty$.'' In particular, $\delta$ determines an algebraic compactification of $\cc^2$ iff $\delta$ satisfies conditions \ref{projectively-finite} and \ref{projectively-positive}. 
\end{rem}

\begin{rem}
$S_\delta$ is isomorphic to the {\em global Enriques semigroup} (in the terminology of \cite{campillo-piltant-lopez-cones-surfaces}) of the compactification of $\cc^2$ from Proposition \ref{compact-prop}. Also, the assertions of Corollary \ref{semi-cor} remain true if in \eqref{enriques} $\deg$ is replaced by any other semidegree which determines an algebraic completion of $\cc^2$ (e.g.\ a weighted degree with positive weights). 
\end{rem}

\subsection*{Acknowledgements}
I am grateful to Professor Peter Russell for the question which led to this work. I would like to thank Professor Pierre Milman - the mathematics of this article was worked out while I was his postdoc at University of Toronto. The article has been written up at the Weizmann Institute as an Azrieli Fellow. I am grateful to the Azrieli Foundation for the award of an Azrieli Fellowship. 

\section{Preliminaries}
\begin{notation}
Throughout the rest of the article we write $X := \cc^2$ with polynomial coordinates $(x,y)$ and let $\bar X^{(0)} \cong \pp^2$ be the compactification of $X$ induced by the embedding $(x,y) \mapsto [1:x:y]$, so that the semidegree on $\cc[x,y]$ corresponding to the line at infinity is precisely on $\bar X^0$ is $\deg$, where $\deg$ is the usual degree in $(x,y)$-coordinates. 
\end{notation}

\subsection{Divisorial discrete valuations, semidegrees, key forms, and associated compactifications}

\begin{defn}[Divisorial discrete valuations] \label{divisorial-defn}
A discrete valuation on $\cc(x,y)$ is a map $\nu: \cc(x,y)\setminus\{0\} \to \zz$ such that for all $f,g \in \cc(x,y)\setminus \{0\}$,
\begin{compactenum}
\item $\nu(f+g) \geq \min\{\nu(f), \nu(g)\}$,
\item $\nu(fg) = \nu(f) + \nu(g)$.
\end{compactenum}
A discrete valuation $\nu$ on $\cc(x,y)$ is called {\em divisorial} iff there exists a normal algebraic surface $Y_\nu$ equipped with a birational map $\sigma: Y_\nu \to \bar X^{0}$ and a curve $C_\nu$ on $Y_\nu$ such that for all non-zero $f \in \cc[x,y]$, $\nu(f)$ is the order of vanishing of $\sigma^*(f)$ along $C_\nu$. The {\em center} of $\nu$ on $\bar X^0$ is $\sigma(C_\nu)$. $\nu$ is said to be {\em centered at infinity} (with respect to $(x,y)$-coordinates) iff the center of $\nu$ on $\bar X^0$ is contained in $\bar X^0 \setminus X$; equivalently, $\nu$ is centered at infinity iff there is a non-zero polynomial $f \in \cc[x,y]$ such that $\nu(f) < 0$. 
\end{defn}

\begin{defn}[Divisorial semidegrees] \label{semi-defn}
A {\em divisorial semidegree} on $\cc(x,y)$ is a map $\delta : \cc(x,y)\setminus\{0\} \to \zz$ such that $-\delta$ is a divisorial discrete valuation.
\end{defn}

\begin{defn}[cf.\ definition of key polynomials in {\cite[Definition 2.1]{favsson-tree}}, also see Remark \ref{key-to-key} below] \label{key-defn}
Let $\delta$ be a divisorial semidegree on $\cc[x,y]$ such that $\delta(x) > 0$. A sequence of elements $g_0, g_1, \ldots, g_{n+1} \in \cc[x,x^{-1},y]$ is called the sequence of {\em key forms} for $\delta$ if the following properties are satisfied:
\begin{compactenum}
\let\oldenumi\theenumi
\renewcommand{\theenumi}{P\oldenumi}
\addtocounter{enumi}{-1}
\item $g_0 = x$, $g_1 = y$.
\item \label{semigroup-property} Let $\omega_j := \delta(g_j)$, $0 \leq j \leq n+1$. Then 
\begin{align*}
\omega_{j+1} < \alpha_j \omega_j = \sum_{i = 0}^{j-1}\beta_{j,i}\omega_i\ \text{for}\ 1 \leq j \leq n,
\end{align*}
where 
\begin{compactenum}
\item $\alpha_j = \min\{\alpha \in \zz_{> 0}: \alpha\omega_j \in \zz \omega_0 + \cdots + \zz \omega_{j-1}\}$ for $1 \leq j \leq n$,
\item $\beta_{j,i}$'s are integers such that $0 \leq \beta_{j,i} < \alpha_i$ for $1 \leq i < j \leq n$ (in particular, $\beta_{j,0}$'s are allowed to be {\em negative}). 
\end{compactenum}

\item \label{next-property} For $1 \leq j \leq n$, there exists $\theta_j \in \cc^*$ such that 
\begin{align*}
g_{j+1} = g_j^{\alpha_j} - \theta_j g_0^{\beta_{j,0}} \cdots g_{j-1}^{\beta_{j,j-1}}.
\end{align*}

\item \label{generating-property} Let $y_1, \ldots, y_{n+1}$ be indeterminates and $\omega$ be the {\em weighted degree} on $B := \cc[x,x^{-1},y_1, \ldots, y_{n+1}]$ corresponding to weights  $\omega_0$ for $x$ and $\omega_j$ for $y_j$, $0 \leq j \leq n+1$ (i.e.\ the value of $\omega$ on a polynomial is the maximum `weight' of its monomials). Then for every polynomial $g \in \cc[x,x^{-1},y]$, 
\begin{align}
\delta(g) = \min\{\omega(G): G(x,y_1, \ldots, y_{n+1}) \in B,\ G(x,g_1, \ldots, g_{n+1}) = g\}. \label{generating-eqn}
\end{align}
\end{compactenum} 
\end{defn}

\begin{thm} \label{key-thm}
There is a unique and finite sequence of key forms for $\delta$. 
\end{thm}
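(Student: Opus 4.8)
The plan is to build the sequence $g_0, g_1, \dots$ by the evident recursion and to note that at each stage \emph{every} quantity occurring in \ref{semigroup-property}--\ref{next-property} is forced by $\delta$; uniqueness then costs nothing, and the work reduces to checking that each recursive step can be carried out, that the recursion terminates, and that \ref{generating-property} holds precisely when it does. A parallel route --- which I will point to where it is shorter --- is to carry out the same bookkeeping on the \emph{generic Puiseux series} of $\delta$ (Definition \ref{dpuiseuxnition}): there the $g_j$ are the successive truncations of the series, the $\theta_j$ are its coefficients, and finiteness is transparent because the series is a finite expression; one may also transfer the statement to the center of $\nu := -\delta$ on $\bar X^{(0)} \cong \pp^2$ and quote the classical facts about \emph{key polynomials} of divisorial valuations (Remark \ref{key-to-key}, \cite{maclane-key}, \cite{favsson-tree}).

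Set $g_0 := x$ and $g_1 := y$ (the zeroth condition), and suppose $g_0,\dots,g_j$ are built and already shown unique. Write $\omega_i := \delta(g_i)$ and $G_i := \zz\omega_0 + \cdots + \zz\omega_i \subseteq \zz$. Since $\delta(x) = \omega_0 > 0$ we have $\omega_0\omega_j \in \zz\omega_0 \subseteq G_{j-1}$, so $\alpha_j := \min\{\alpha > 0 : \alpha\omega_j \in G_{j-1}\}$ is well defined with $1 \le \alpha_j \le \omega_0$; by its minimality $[G_i : G_{i-1}] = \alpha_i$, and a short induction shows that every element of $G_j$ has a \emph{unique} ``digit'' expansion $\sum_{i=0}^j c_i \omega_i$ with $c_0 \in \zz$ and $0 \le c_i < \alpha_i$ for $1 \le i \le j$. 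Applied to $\alpha_j \omega_j \in G_{j-1}$, this pins down the integers $\beta_{j,0},\dots,\beta_{j,j-1}$, hence the monomial $m_j := g_0^{\beta_{j,0}}\cdots g_{j-1}^{\beta_{j,j-1}} \in \cc[x,x^{-1},y]$, which satisfies $\delta(m_j) = \alpha_j\omega_j = \delta(g_j^{\alpha_j})$. A further induction gives $\deg_y g_i = \alpha_1\cdots\alpha_{i-1}$, so $\deg_y(g_j^{\alpha_j}) = \alpha_1\cdots\alpha_j > \deg_y m_j$; hence for every $\theta \in \cc$ the difference $g_j^{\alpha_j} - \theta m_j$ is a nonzero element of $\cc[x,x^{-1},y]$, monic in $y$.

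The heart of the matter --- the step I expect to be the main obstacle --- is to produce a (necessarily unique) $\theta_j \in \cc^*$ with $\delta(g_j^{\alpha_j} - \theta_j m_j) < \alpha_j\omega_j$; one then sets $g_{j+1} := g_j^{\alpha_j} - \theta_j m_j$, so that \ref{semigroup-property} (including $\omega_{j+1} < \alpha_j\omega_j$) and \ref{next-property} hold. Pass to the associated graded ring $\gr_\delta$ of $\cc[x,x^{-1},y]$ --- a $\zz$-graded domain, since $-\delta$ is a valuation --- with $\bar g_i$ the leading form of $g_i$. Because $\nu = -\delta$ is divisorial, $\gr_\delta$ embeds into the graded ring of $\nu$ on $\cc(x,y)$, which is $\kappa(\nu)[t,t^{-1}]$ with residue field $\kappa(\nu) \cong \cc(C_\nu)$ of transcendence degree $1$ over $\cc$; consequently the degree-zero part of $\mathrm{Frac}(\gr_\delta)$ lies in $\kappa(\nu)$. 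Now $\bar g_j^{\alpha_j}$ and $\bar m_j$ are nonzero elements of the same graded piece, so $r_j := \bar g_j^{\alpha_j}/\bar m_j$ is a well-defined element of $\kappa(\nu)$, and for $\theta \in \cc^*$ one has $\delta(g_j^{\alpha_j} - \theta m_j) < \alpha_j\omega_j$ if and only if $r_j = \theta$ in $\gr_\delta$ --- so there is at most one admissible $\theta$. Since $\cc$ is algebraically closed, $r_j$ is either a nonzero constant or transcendental over $\cc$, and the recursion is driven by this dichotomy: in the first case we take $\theta_j := r_j$ and continue; in the second no admissible $g_{j+1}$ exists, the sequence terminates, $g_j$ being its final term $g_{n+1}$, and one verifies that \eqref{generating-eqn} holds there --- equivalently, that $\bar g_0^{\pm 1}, \bar g_1,\dots,\bar g_{n+1}$ generate $\gr_\delta$ as a $\cc$-algebra --- because $\gr_\delta$ has now acquired a degree-zero generator transcendental over $\cc$, so the $\cc$-subalgebra generated by the $\bar g_i$ has attained full transcendence degree $2$ and, using the $\omega$-homogeneity of the relations, equals $\gr_\delta$. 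On the generic Puiseux series this is immediate, $r_j$ being the next coefficient of the series, which is constant until the ``generic'' last term. Proving that $r_j$ is transcendental over $\cc$ exactly when \eqref{generating-eqn} already holds for $g_0,\dots,g_j$ is the technical core; it also makes the two cases genuinely exclusive, so the stopping index is well defined.

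It remains to see that the recursion does stop. All $\omega_i$ lie in $\zz$, so $G_0 \subseteq G_1 \subseteq \cdots$ is an increasing chain of nonzero subgroups of $\zz$, and $\prod_i \alpha_i$ equals the index of $\zz\omega_0$ in $\bigcup_i G_i$ --- a finite number, at most $\omega_0$. Thus only finitely many $\alpha_i$ exceed $1$, and the remaining ``$\alpha_i = 1$'' steps neither enlarge $\bigcup_i G_i$ nor raise $\deg_y$ --- they merely lower $\omega_i$ by appending a term of smaller $\delta$-value --- so there are finitely many of them, since the generic Puiseux series of a divisorial semidegree is a finite expression (equivalently, by Remark \ref{key-to-key}, a divisorial valuation has only finitely many key polynomials). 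Hence the recursion halts, producing a sequence $g_0,\dots,g_{n+1}$ of key forms. Uniqueness follows because no choice was ever available: any sequence of key forms coincides with this one term by term, and it has the same length, for \eqref{generating-eqn} fails at every level below $n+1$ (so a shorter sequence would violate \ref{generating-property}), while the inequality $\omega_{n+2} < \alpha_{n+1}\omega_{n+1}$ demanded of a hypothetical $(n+2)$-nd key form would force $r_{n+1} \in \cc^*$, contradicting that $r_{n+1}$ is transcendental (so a longer sequence would violate \ref{semigroup-property}). This yields the asserted existence, uniqueness, and finiteness.
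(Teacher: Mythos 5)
Your recursion correctly shows that every datum of a key-form sequence is forced: $\alpha_j$ and the digits $\beta_{j,i}$ are determined by $\omega_0,\dots,\omega_j$, and the residue $r_j=\bar g_j^{\alpha_j}/\bar m_j\in\kappa(\nu)$ pins down the unique admissible $\theta_j$ when one exists; the bound $\prod_i\alpha_i=[\bigcup_iG_i:\zz\omega_0]\leq \omega_0$ is also fine. But there is a genuine gap exactly where you flag ``the technical core'': you never prove that $r_j$ is transcendental over $\cc$ if and only if \eqref{generating-eqn} holds for $g_0,\dots,g_j$. Both directions are indispensable --- the ``if'' direction to know that the sequence you stop at actually satisfies \ref{generating-property} (so that a sequence of key forms \emph{exists}), and the ``only if'' direction to exclude a shorter valid sequence (so that it is \emph{unique}; without exclusivity of your two cases, both $(g_0,\dots,g_j)$ and a continuation could satisfy all of P0--P3). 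The one argument you offer for the ``if'' direction is a non sequitur: a graded $\cc$-subalgebra of $\gr_\delta$ of transcendence degree $2$ need not equal $\gr_\delta$ (compare $\cc[x^2,y]\subsetneq\cc[x,y]$ with the standard grading), and in any case $r_j$ lies only in the degree-zero part of the \emph{fraction field} of the subalgebra generated by $\bar g_0^{\pm1},\bar g_1,\dots,\bar g_j$, not in that subalgebra, since only $\bar g_0$ is inverted. Closing this requires the $g_j$-adic expansion and a no-cancellation argument, i.e.\ essentially the full MacLane/Favre--Jonsson machinery. A second gap is termination: finiteness of the steps with $\alpha_j=1$ is precisely where divisoriality enters (infinitely singular valuations have infinite key sequences), and you dispose of it by invoking the finiteness of the generic degree-wise Puiseux series (Proposition \ref{valdeg}, itself a quoted result) or, ``equivalently,'' the finiteness of the key polynomial sequence --- which is the statement being proved.

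For comparison, the paper does not reprove any of this: its entire proof of Theorem \ref{key-thm} is Remark \ref{key-to-key}, namely the coordinate change $u=1/x$, $v=y/x^k$, which identifies the key forms of $\delta$ with the classical key polynomials of $\nu=-\delta$ via \eqref{form-from-pol} and then cites \cite[Theorem 2.29]{favsson-tree}. So either you permit yourself those citations --- in which case the short route you mention in passing \emph{is} the proof and the recursion is superfluous --- or you do not, in which case the two points above must actually be proved, and your write-up does not do so.
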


\begin{rem} \label{key-to-key}
Let $\delta$ be as in Definition \ref{key-defn}. Set $u := 1/x$ and $v := y/x^k$ for some $k$ such that $\delta(y) < k\delta(x)$, and let $\tilde g_0 = u, \tilde g_1 = v, \tilde g_2, \ldots, \tilde g_{n+1} \in \cc[u,v]$ be the {\em key polynomials} of $\nu := -\delta$ in $(u,v)$-coordinates. Then the key forms of $\delta$ can be computed from $\tilde g_j$'s as follows:
\begin{align}
g_j(x,y) := \begin{cases}
				x 												& \text{for}\ j = 0,\\
				x^{k\deg_{v}(\tilde g_j)}\tilde g_j(1/x,y/x^k)  & \text{for}\ 1 \leq j \leq n+1.
			\end{cases} \label{form-from-pol}
\end{align}
Theorem \ref{key-thm} is an immediate consequence of the existence of key polynomials (see e.g.\ \cite[Theorem 2.29]{favsson-tree}).
\end{rem}

\begin{example} 
Let $(p,q)$ are integers such that $p >0$ and $\delta$ be the weighted degree on $\cc(x,y)$ corresponding to weights $p$ for $x$ and $q$ for $y$. Then the key forms of $\delta$ are $x,y$. 
\end{example}

\begin{example} \label{key-example}
Let $\epsilon := q/{2p}$ for positive integers $p, q$ such that $q < 2p$ and $\delta_\epsilon$ be the semidegree on $\cc(x,y)$ defined as follows:
\begin{align}
\delta_\epsilon(f(x,y)) := 2p\deg_x\left(f(x,y)|_{y = x^{5/2} + x^{-1} + \xi x^{-5/2 -\epsilon}}\right)\quad \text{for all}\ f \in \cc(x,y)\setminus\{0\},
\end{align}
where $\xi$ is a new indeterminate and $\deg_x$ is the degree in $x$. Note that $\delta_\epsilon = -2p\nu_\epsilon$, where $\nu_\epsilon$ is from Example \ref{illustration} (we multiplied by $2p$ to simply make the semidegree integer valued). Then the sequence of key forms of $\delta_\epsilon$ is $x,y, y^2 - x^5, y^2 - x^5 - 2x^{-1}y$. 
\end{example}

The following property of key forms can be proved in a straightforward way from their defining properties.
\begin{prop} \label{last-prop}
Let $\delta$ and $g_0, \ldots, g_{n+1}$ be as in Definition \ref{key-defn} and $d_\delta$ be as in \eqref{d-delta}. Define 
\begin{align}
m_\delta	&:= \gcd\left(\delta(g_0), \ldots, \delta(g_n)\right). \label{m-delta}		
\end{align} 
Then
\begin{align}
m_\delta \delta(g_{n+1})&\leq d_\delta^2. \label{d-m-delta-ineq}
\end{align}
Moreover, \eqref{d-m-delta-ineq} is satisfied with an equality iff $\delta = \deg$. 
\end{prop}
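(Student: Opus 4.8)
The plan is to work directly with the defining properties of the key forms, especially the "semigroup property" \ref{semigroup-property} and the weighted-degree presentation \ref{generating-property}, together with the recursion \ref{next-property}. First I would set $\omega_j := \delta(g_j)$ and recall that $\alpha_j \omega_j = \sum_{i=0}^{j-1}\beta_{j,i}\omega_i$ for $1 \le j \le n$, so that $\omega_{n+1} < \alpha_n \omega_n$. The basic quantitative input is a comparison of $d_\delta = \max\{\delta(x),\delta(y)\} = \max\{\omega_0,\omega_1\}$ with the $\omega_j$; since $\delta(x) = \omega_0 > 0$ and $\delta$ is a semidegree, one expects $\omega_1 = \delta(y)$ could be of either sign, but the relevant product $m_\delta \omega_{n+1}$ should be controlled by $d_\delta^2$. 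I would try to prove the inequality by induction on $n$, peeling off the last key form: writing $g_{n+1} = g_n^{\alpha_n} - \theta_n g_0^{\beta_{n,0}}\cdots g_{n-1}^{\beta_{n,n-1}}$ and comparing $\omega_{n+1}$ against $\alpha_n \omega_n$, while tracking how $m_\delta = \gcd(\omega_0,\ldots,\omega_n)$ evolves. The key numerical fact is presumably that $\alpha_j = [\,\zz\omega_0 + \cdots + \zz\omega_{j-1} : \zz\omega_0 + \cdots + \zz\omega_j\,]$ so that $m_\delta = \alpha_1 \cdots \alpha_n \cdot \gcd(\text{something})$, and that $d_\delta$ bounds $\omega_n/(\alpha_1\cdots\alpha_{n-1})$ or a similar ratio.

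Concretely, I would try to establish two auxiliary estimates. The first is a lower bound on $d_\delta$: since $g_0 = x$ and $g_1 = y$ lie in $\cc[x,y]$ with $\delta(g_0) = \omega_0$, $\delta(g_1) = \omega_1$, we have $d_\delta \ge \omega_0 > 0$ and $d_\delta \ge \omega_1$. More importantly, using \ref{generating-property} applied to $g = x$ and $g = y$ we get the exact values, and applied to a monomial $x^ay^b$ we get $\delta(x^ay^b) = a\omega_0 + b\omega_1 \le (a+b)d_\delta$. The second estimate, the heart of the matter, is an upper bound for $\omega_{n+1}$ in terms of $d_\delta$ and $m_\delta$. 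Here I would exploit that $g_{n+1}$, when expanded in $x, y$ via the recursion, is a polynomial of bounded "degree" in an appropriate sense: iterating \ref{next-property}, the total degree (or $\deg$-value) of $g_{n+1}$ is $\alpha_1 \alpha_2 \cdots \alpha_n$ times a bounded quantity, and hence $\deg(g_{n+1}) \cdot d_\delta \ge \delta(g_{n+1}) = \omega_{n+1}$ by the trivial inequality $\delta \le d_\delta \cdot \deg$. Combined with the identity $m_\delta = \gcd(\omega_0,\ldots,\omega_n)$, which by the tower structure of the $\alpha_j$ equals $\omega_0/(\alpha_1\cdots\alpha_n)$ up to the gcd with $\omega_1$, one should get $m_\delta \omega_{n+1} \le \big(\omega_0/(\alpha_1\cdots\alpha_n)\big)\big(\alpha_1\cdots\alpha_n\, d_\delta\big) \le d_\delta^2$ — but I expect the bookkeeping of the gcd and the $\alpha_j$'s to require care, especially because $\beta_{j,0}$ may be negative.

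For the equality characterization, I would analyze when each of the inequalities used above is tight. Equality $m_\delta \omega_{n+1} = d_\delta^2$ should force $n = 0$ (no nontrivial key forms beyond $x, y$), so that $\delta$ is just a weighted degree with weights $\omega_0 = \delta(x)$ and $\omega_1 = \delta(y)$; then $m_\delta = \omega_0$, $d_\delta = \max\{\omega_0, \omega_1\}$, $g_{n+1} = g_1 = y$ so $\omega_{n+1} = \omega_1$, and $m_\delta \omega_{n+1} = \omega_0 \omega_1 = d_\delta^2$ forces $\omega_0 = \omega_1 = d_\delta$, i.e. $\delta(x) = \delta(y)$; the only such divisorial semidegree with the center at infinity and $\delta(x) = \delta(y) > 0$ should be $\deg$ itself (after noting $\delta$ must actually be a semidegree of a line at infinity, which pins it down to the standard degree since $\delta(x) = \delta(y) = 1$ up to the fact that divisorial semidegrees here are $\zz$-valued and primitive, or else one rescales). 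Conversely $\delta = \deg$ obviously gives equality.

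The main obstacle I anticipate is the precise relation between $m_\delta$, the sequence $\alpha_1, \ldots, \alpha_n$, and $d_\delta$ — in particular proving the sharp bound $\deg(g_{n+1}) \le \alpha_1 \cdots \alpha_n$ (so that $\omega_{n+1} \le d_\delta \cdot \alpha_1 \cdots \alpha_n$) requires understanding the leading form of $g_{n+1}$ under the recursion and ruling out cancellation, and the negativity of the $\beta_{j,0}$ exponents means $g_{n+1}$ genuinely lives in $\cc[x,x^{-1},y]$ rather than $\cc[x,y]$, so "degree" must be interpreted as the $x$-adic-shifted total degree. I would handle this by passing to the local key polynomials $\tilde g_j$ via Remark \ref{key-to-key}, where the classical theory of Abhyankar--Moh type inequalities for key polynomials (the bound $\deg_v \tilde g_{j+1} = \alpha_j \deg_v \tilde g_j$) gives $\deg_v \tilde g_{n+1} = \alpha_1 \cdots \alpha_n$ cleanly, and then translate back using \eqref{form-from-pol}.
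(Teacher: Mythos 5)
The paper itself does not print a proof of this proposition (it is asserted to follow ``in a straightforward way from the defining properties''), so I compare your plan against the direct argument those properties yield. Your outline contains the right raw ingredients, but the central quantitative chain you propose is broken at both links. First, the ``trivial inequality'' $\delta(f)\le d_\delta\deg(f)$ is only trivial for $f\in\cc[x,y]$, whereas $g_{n+1}$ in general lies in $\cc[x,x^{-1},y]\setminus\cc[x,y]$; monomial-wise the inequality already fails for negative $x$-exponents (in Example \ref{key-example}, $\delta(x^{-1}y)=3p>0=d_\delta\deg(x^{-1}y)$), and for $f=g_{n+1}$ itself the inequality $\delta(g_{n+1})\le d_\delta\deg(g_{n+1})$ is, in view of the identity $\deg(g_{n+1})=d_\delta/m_\delta$ (Proposition \ref{g_{n+1}-prop}, assertions \ref{g_{n+1}-deg} and \ref{d-m-delta}, for $\delta$ not a weighted degree), \emph{literally equivalent} to \eqref{d-m-delta-ineq} --- so invoking it there is circular. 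Second, the ``sharp bound'' $\deg(g_{n+1})\le\alpha_1\cdots\alpha_n$ is false: in Example \ref{key-example} one has $\alpha_1=2$, $\alpha_2=1$, yet $\deg(g_{n+1})=\deg(y^2-x^5-2x^{-1}y)=5$. (What is true is $\deg_v\tilde g_{n+1}=\alpha_1\cdots\alpha_n$ for the \emph{local} key polynomials, but the substitution \eqref{form-from-pol} rescales degrees, which is exactly why $\deg(g_{n+1})$ comes out as $d_\delta/m_\delta$ and not $\alpha_1\cdots\alpha_n$.) Your bookkeeping formula ``$m_\delta=\alpha_1\cdots\alpha_n\cdot\gcd(\cdots)$'' is also inverted: since $\alpha_j=\gcd(\omega_0,\dots,\omega_{j-1})/\gcd(\omega_0,\dots,\omega_j)$, telescoping gives $\alpha_1\cdots\alpha_n=\omega_0/m_\delta$, i.e.\ $m_\delta=\delta(x)/(\alpha_1\cdots\alpha_n)$ exactly, with no correction term.

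The statement follows directly from property \ref{semigroup-property} alone, with no mention of $\deg(g_{n+1})$ and no passage to local coordinates. If $\delta(g_{n+1})\le0$, then $m_\delta\delta(g_{n+1})\le0<d_\delta^2$ since $m_\delta>0$ and $d_\delta\ge\delta(x)>0$. If $n=0$, then $g_{n+1}=y$ and $m_\delta=\delta(x)$, so $m_\delta\delta(g_1)=\delta(x)\delta(y)\le d_\delta^2$, with equality iff $\delta(x)=\delta(y)=d_\delta$. If $n\ge1$ and $\omega_{n+1}>0$, iterating the strict inequalities $\omega_{j+1}<\alpha_j\omega_j$ gives $\omega_{n+1}<\alpha_n\omega_n<\alpha_n\alpha_{n-1}\omega_{n-1}<\cdots<\alpha_1\cdots\alpha_n\,\omega_1$, whence $m_\delta\omega_{n+1}<m_\delta\alpha_1\cdots\alpha_n\,\omega_1=\omega_0\omega_1\le d_\delta^2$ (positivity of the left side forces $\omega_1>0$, so the last step is legitimate), and the inequality is strict. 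Your third paragraph, on the equality case, is essentially correct and completes the proof: equality forces $n=0$ and $\delta(x)=\delta(y)=d_\delta$, so $\delta$ is $d_\delta\cdot\deg$, and divisoriality (the value group of $-\delta$ must be all of $\zz$) forces $d_\delta=1$, i.e.\ $\delta=\deg$. In short, the ingredients you name at the outset suffice, but the detour through degrees of the key forms, as you set it up, would not go through.
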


\begin{prop}[{\cite[Propositions 4.2 and 4.7]{sub2-1}}] \label{compact-prop}
Given a divisorial semidegree $\delta$ on $\cc[x,y]$ such that $\delta \neq \deg$ and $\delta(x) > 0$, there exists a unique compactification $\bar X$ of $\cc^2$ such that 
\begin{enumerate}
\item $\bar X$ is projective and normal.
\item $\bar X_\infty := \bar X \setminus X$ has two irreducible components $C_1,C_2$.
\item The semidegree on $\cc[x,y]$ corresponding to $C_1$ and $C_2$ are respectively $\deg$ and $\delta$. 
\end{enumerate}
Moreover, all singularities $\bar X$ are {\em rational}. Let $g_0, \ldots, g_{n+1}$ be the key forms of $\delta$. Then the {\em inverse} of the matrix of intersection numbers $(C_i, C_j)$ of $C_i$ and $C_j$, $1 \leq i,j \leq 2$, is 
\begin{align}
\scrM = \begin{pmatrix}
			1 		 & d_\delta \\
			d_\delta & m_\delta \delta(g_{n+1})
		\end{pmatrix},
\end{align} 
where $d_\delta$ and $m_\delta$ are as in respectively \eqref{d-delta} and \eqref{m-delta}.
\end{prop}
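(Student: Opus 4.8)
The plan is: build $\bar X$ explicitly by blowing up $\bar X^{(0)}\cong\pp^2$ and then contracting; get rationality of the singularities for free from the fact that the contraction still maps to $\pp^2$; prove uniqueness by a rigidity argument using that a divisorial semidegree determines its divisor on a fixed model; and compute $\scrM$ by intersecting the two boundary curves with closures of plane curves. For existence, since $\nu:=-\delta$ is divisorial and centered at infinity there is a composition of point blow-ups $\pi\colon Z\to\bar X^{(0)}$ with all centers over $L_\infty:=\bar X^{(0)}\setminus X$ such that $\nu=\ord_E$ for some $\pi$-exceptional prime $E\subset Z$; after further blow-ups I may assume $Z$ is smooth and $\pi^{-1}(L_\infty)$ is a simple normal crossings tree of rational curves, whose components are $E$, the strict transform $\widetilde L_\infty$ of $L_\infty$, and other $\pi$-exceptional primes. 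I then contract every component of $\pi^{-1}(L_\infty)$ other than $\widetilde L_\infty$ and $E$: the configuration being contracted consists solely of $\pi$-exceptional curves, hence is negative definite (a sub-configuration of the exceptional locus of the birational morphism $\pi$), so by the Grauert/Artin contractibility criterion (see also \cite{contractibility}) the contraction $p\colon Z\to\bar X$ exists with $\bar X$ normal and projective. Put $C_1:=p(\widetilde L_\infty)$ and $C_2:=p(E)$. Then $\bar X\setminus X=C_1\cup C_2$ with $C_1,C_2$ distinct irreducible curves, and the semidegrees attached to $C_1$ and $C_2$ are $\ord_{\widetilde L_\infty}$ and $\ord_E$, i.e.\ $\deg$ and $\delta$ — for $C_1$ because the order of vanishing along a strict transform equals that on the original model, $\pi$ being an isomorphism near the generic point of $L_\infty$; for $C_2$ by the choice of $E$. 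Thus $\bar X$ satisfies (1)--(3).

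\textbf{Rationality of the singularities.} By construction $p$ contracts only $\pi$-exceptional curves, so $\pi$ is constant on each (connected) fibre of $p$ and therefore factors as a birational morphism $\bar X\to\bar X^{(0)}\cong\pp^2$. A normal surface admitting a birational morphism onto a smooth surface is sandwiched, and sandwiched surface singularities are rational: resolving $\bar X$ by some $\bar X'\to\bar X$, the composite $\bar X'\to\pp^2$ is a sequence of blow-ups of a smooth surface, so $R^1$ of its structure-sheaf pushforward vanishes, and a Leray spectral sequence argument for $\bar X'\to\bar X\to\pp^2$ forces $R^1(\bar X'\to\bar X)_*\sheaf=0$. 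Hence every singularity of $\bar X$ is rational.

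\textbf{Uniqueness.} Let $\bar X'$ be a second compactification with properties (1)--(3), boundary $C_1'\cup C_2'$, semidegrees $\deg,\delta$. Let $\bar W$ be the normalization of the closure of the graph of $\bar X\dashrightarrow\bar X'$; it is a normal projective surface with birational morphisms $q\colon\bar W\to\bar X$ and $q'\colon\bar W\to\bar X'$, both isomorphisms over $X$. Since a birational morphism to a normal surface is an isomorphism near the generic point of any boundary prime it does not contract (a DVR contained in a DVR with the same fraction field is equal), $\bar W$ has a unique boundary prime $D_1$ with semidegree $\deg$ and a unique one $D_2$ with semidegree $\delta$; $q$ maps $D_1$ onto $C_1$ and $D_2$ onto $C_2$, while $q'$ maps $D_1$ onto $C_1'$ and $D_2$ onto $C_2'$, and both morphisms must contract every other boundary prime of $\bar W$ to a point. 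So $q$ and $q'$ contract exactly the same curves, hence have the same fibres, and therefore $\bar X\cong\bar X'$ over $X$. (This is \cite[Proposition 4.2]{sub2-1}.)

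\textbf{The intersection matrix, and the main obstacle.} Since $X=\cc^2$ has trivial divisor class group, $\cl(\bar X)\otimes\qq=\qq C_1+\qq C_2$, and for every nonzero $f\in\cc[x,y]$ the equality of Weil divisors $\Div(f)=\overline{\{f=0\}}-\deg(f)\,C_1-\delta(f)\,C_2$ (the closure being that of the affine zero locus) gives
\[
\overline{\{f=0\}}\ \sim\ \deg(f)\,C_1+\delta(f)\,C_2 .
\]
Intersecting this with $C_1$ and with $C_2$ in Mumford's sense on the normal surface $\bar X$ turns each $f$ into linear relations among $C_1^2$, $C_1\cdot C_2$, $C_2^2$. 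Two sufficiently general lines have closures disjoint from $C_2$, meeting $C_1$ transversally in one smooth point of $\bar X$ and meeting each other in one point of $X$; since $\delta$ of such a line equals $\max\{\delta(x),\delta(y),0\}=d_\delta$, one obtains $C_1^2+d_\delta\,(C_1\cdot C_2)=1$ and $(C_1\cdot C_2)+d_\delta\,C_2^2=0$, which are exactly the relations equivalent to the first row of $(C_i\cdot C_j)^{-1}$ being $(1,\,d_\delta)$. The remaining entry, $m_\delta\,\delta(g_{n+1})$, comes from the curve $\Gamma$ defined by a denominator-cleared form of the last key form $g_{n+1}$: up to the influence of the earlier key forms, $\Gamma$ is the unique plane curve whose closure in $\bar X$ meets $C_2$, and computing $(\overline\Gamma\cdot C_2)$ through the resolution pins down $C_2^2$. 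Carrying this out is the main obstacle: one must describe the dual graph of $\pi^{-1}(L_\infty)$ in terms of the continued-fraction data implicit in \ref{semigroup-property}--\ref{next-property} (the integers $\alpha_j$, $\beta_{j,i}$), compute Mumford's $\qq$-valued pullbacks $p^*C_i$ across the possibly long, branched chains of contracted rational curves, and verify that the resulting rational matrix $(C_i\cdot C_j)^{-1}$ collapses to the integral matrix $\scrM$ — an induction on the number of key forms keeping track of $\gcd(\omega_0,\dots,\omega_j)$ and $\max\{\omega_0,\dots\}$, which is the substance of \cite[Proposition 4.7]{sub2-1}. Finally, $\det\scrM=m_\delta\,\delta(g_{n+1})-d_\delta^2$ is forced to be $<0$ for $\delta\neq\deg$ by the Hodge index theorem on the projective surface $\bar X$, consistently with Proposition \ref{last-prop}.
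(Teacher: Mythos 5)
This proposition is not proved in the paper at all: it is imported verbatim from \cite[Propositions 4.2 and 4.7]{sub2-1}, so there is no internal proof to compare against and your proposal is in effect a reconstruction of the cited source. Most of what you write is sound. The existence construction (resolve $\nu=-\delta$ by blow-ups over the line at infinity, then contract all boundary components except $\widetilde L_\infty$ and $E$) is the right one, and the contracted configuration is indeed negative definite as a subconfiguration of the $\pi$-exceptional locus; note only that Grauert/Artin gives you a normal analytic surface or algebraic space, and projectivity of $\bar X$ needs the extra (standard) step that a complete normal surface with $\qq$-factorial, e.g.\ rational, singularities is projective --- so your rationality argument should logically precede, not follow, the claim that $\bar X$ is projective. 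The sandwiched-hence-rational argument and the uniqueness argument via the normalized graph are correct, and the computation of the first row of $\scrM$ from two general lines (using $\delta(\ell)=\max\{\delta(x),\delta(y)\}=d_\delta$ and the linear equivalence $\overline{\{f=0\}}\sim\deg(f)C_1+\delta(f)C_2$) is clean and complete.

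The genuine gap is the entry $m_\delta\,\delta(g_{n+1})$, which is the entire point of the proposition: it is the only place where the key forms enter, and it is what drives every subsequent proof in the paper (the sign of $\delta(g_{n+1})$ determines the sign of $(C_1,C_1)$ via \eqref{intersection-matrix}). You explicitly defer this computation --- ``Carrying this out is the main obstacle \dots which is the substance of \cite[Proposition 4.7]{sub2-1}'' --- so the proof is not actually given. Moreover, the route you sketch for it has a wrinkle you do not address: $g_{n+1}$ lies in $\cc[x,x^{-1},y]$ and need not be a polynomial, and by Proposition \ref{polynomial-prop} there may be \emph{no} compact curve on $\bar X$ disjoint from $C_1$ in that case, so ``the curve $\Gamma$ defined by a denominator-cleared form of $g_{n+1}$'' does not directly furnish a test curve pinning down $(C_2,C_2)$; one really does have to run the induction on key forms through the dual graph and Mumford's $\qq$-valued pullbacks, tracking $d_\delta$ and $m_\delta$ as you indicate, or compute $(E,E)$ on the resolution and correct by the discrepancies of the contracted chains. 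As it stands you have proved existence, uniqueness, rationality, and the first column of $\scrM$, but not the one entry that carries the content.
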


We will use the following result which is an immediate corollary of \cite[Proposition 4.2]{contractibility}.

\begin{prop} \label{polynomial-prop}
Let $\delta$, $\bar X$ and $C_1, C_2$ be as in Proposition \ref{compact-prop}. Let $g_0, \ldots, g_{n+1}$ be the key forms of $\delta$. Then the following are equivalent:
\begin{enumerate}
\item there is a (compact algebraic) curve $C$ on $\bar X$ such that $C \cap C_1 = \emptyset$.
\item $g_k$ is a polynomial for all $k$, $0 \leq k \leq n+1$.
\item $g_{n+1}$ is a polynomial.
\end{enumerate}  
\end{prop}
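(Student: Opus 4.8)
\textbf{Proof proposal for Proposition \ref{polynomial-prop}.}

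The plan is to deduce all three equivalences from the algebraic contractibility criterion of \cite[Proposition 4.2]{contractibility}, which (as indicated in the remark after Theorem \ref{positive-thm}) is stated precisely in terms of key forms. First I would record the implication (2) $\Rightarrow$ (3), which is trivial. The implication (3) $\Rightarrow$ (2) should follow from the recursive structure of key forms in property \ref{next-property}: since $g_{n+1} = g_n^{\alpha_n} - \theta_n g_0^{\beta_{n,0}}\cdots g_{n-1}^{\beta_{n,n-1}}$ with $\beta_{n,0}$ possibly negative, and $g_0 = x$, the only way a negative power of $x$ can fail to appear in $g_{n+1}$ is controlled by the exponents $\beta_{k,0}$ appearing all the way down the tower; I would argue by downward induction that if $g_{n+1} \in \cc[x,y]$ then in fact $\beta_{k,0} \geq 0$ is forced at each stage (equivalently, $\delta(g_k) \geq 0$ so no compensating negative power of $x$ was ever introduced), hence every $g_k$ lies in $\cc[x,y]$. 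One has to be slightly careful here because $g_k \in \cc[x,x^{-1},y]$ a priori, and the claim is that the obstruction to polynomiality propagates monotonically up the tower.

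The substantive content is the equivalence of (1) with (2)/(3). Here I would invoke \cite[Proposition 4.2]{contractibility} directly: that result characterizes, for the compactification $\bar X$ of Proposition \ref{compact-prop}, when the curve $C_1$ (the line at infinity of $\bar X^{(0)} \cong \pp^2$, carrying the semidegree $\deg$) can be "separated" from some compact curve, and the criterion is exactly polynomiality of the key forms. Concretely, a curve $C$ on $\bar X$ with $C \cap C_1 = \emptyset$ is the closure in $\bar X$ of an affine curve $\{f = 0\} \subseteq \cc^2$ whose closure in $\bar X^{(0)}$ misses the line at infinity there — but that forces $f$ to be constant unless the valuation $-\delta$ "sees" $C$ differently, so the real statement is that $C$ lies in the fibres over $C_2$; the existence of such a $C$ disjoint from $C_1$ is equivalent to $C_2$ being "contractible away from $C_1$", which by the cited proposition is equivalent to the key forms being polynomials. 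I would phrase this as: apply \cite[Proposition 4.2]{contractibility} with the data $(\bar X, C_1, C_2)$ and key forms $g_0, \ldots, g_{n+1}$, and translate its conclusion verbatim into the three conditions here.

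The main obstacle I anticipate is matching conventions: \cite[Proposition 4.2]{contractibility} is presumably stated for a valuation centered at infinity together with its key polynomials in \emph{local} coordinates near the center, whereas here we work with key forms in the global $(x,y)$-coordinates. The bridge is Remark \ref{key-to-key}, which gives the dictionary $g_j(x,y) = x^{k\deg_v(\tilde g_j)}\tilde g_j(1/x, y/x^k)$ between key forms and key polynomials. So before applying the contractibility criterion I would first check that "$\tilde g_j$ is a polynomial in the local chart relevant to \cite{contractibility}" corresponds under this dictionary to "$g_j \in \cc[x,y]$" — i.e.\ that clearing the denominators $x^{k\deg_v(\tilde g_j)}$ exactly kills all negative powers of $x$ iff $\tilde g_j$ had no pole along $C_1$. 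Once that translation is pinned down, the proposition is essentially a restatement of \cite[Proposition 4.2]{contractibility}, and the cycle of implications (1) $\Rightarrow$ (2) $\Rightarrow$ (3) $\Rightarrow$ (1) closes with the downward-induction argument above supplying (3) $\Rightarrow$ (2) and the cited result supplying the rest.
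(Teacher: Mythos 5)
Your route is essentially the paper's: the paper offers no independent argument, stating the proposition verbatim as an immediate corollary of \cite[Proposition 4.2]{contractibility}, with Remark \ref{key-to-key} supplying exactly the key-polynomial/key-form dictionary you describe. One caveat: in your sketch of (3) $\Rightarrow$ (2), the parenthetical claim that $\beta_{k,0} \geq 0$ is equivalent to $\delta(g_k) \geq 0$ is false --- for the semidegree of Example \ref{same-semi-example} one has $\delta(g_2) = 3 \geq 0$ yet $g_3 = g_2 - 2x^{-1}y$, so $\beta_{2,0} = -1$ --- but this is harmless for your overall plan, since the cited proposition already yields the equivalence of (2) and (3), making the downward-induction step redundant rather than load-bearing.
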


The following is the main result of \cite{contractibility}:

\begin{thm} \label{algebraic-thm}
Let $\delta$ be a divisorial semidegree  on $\cc[x,y]$ such that $\delta(x) > 0$ and $g_0, \ldots, g_{n+1}$ be the key forms of $\delta$. Then $\delta$ determines a normal algebraic compactification of $\cc^2$ (in the sense of Remark \ref{determining-remark}) iff $\delta(g_{n+1}) > 0$ and $g_{n+1}$ is a polynomial.
\end{thm}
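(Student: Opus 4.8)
The plan is to work with the two-curve compactification $\bar X$ of Proposition~\ref{compact-prop} and to recast the statement as a contractibility question for the curve $C_1\subseteq\bar X$ corresponding to $\deg$. (The case $\delta=\deg$ is immediate: the key forms are then $x,y$, so $g_{n+1}=y$ is a polynomial with $\delta(y)=1>0$, and $\deg$ determines $\bar X^{(0)}\cong\pp^2$.) If $\delta$ determines an algebraic compactification $\bar X''$, then there is a birational map from $\bar X$ to $\bar X''$ which is an isomorphism over $\cc^2$ and under which $C_1$ (having no counterpart at infinity on $\bar X''$) must be contracted; by the construction of $\bar X$ in \cite{sub2-1} this forces $\bar X''$ to be the surface obtained from $\bar X$ by contracting $C_1$. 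Conversely, any \emph{projective} contraction of $C_1$ is such a $\bar X''$. So the question becomes: \emph{when can $C_1$ be contracted to a point on a projective surface?}

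\textbf{Numerical data and the ``only if'' direction.} By Proposition~\ref{last-prop}, $\det\scrM=m_\delta\delta(g_{n+1})-d_\delta^2\le 0$ with equality only for $\delta=\deg$; hence for $\delta\ne\deg$ the intersection matrix $(C_i\cdot C_j)=\scrM^{-1}$ has negative determinant, so $C_2^2<0$, $C_1\cdot C_2>0$, and $C_1^2=m_\delta\delta(g_{n+1})/\det\scrM$ has sign opposite to that of $\delta(g_{n+1})$. Thus $C_1$ is not contractible even in the analytic category unless $\delta(g_{n+1})>0$, which gives the necessity of that condition. For the necessity of ``$g_{n+1}$ is a polynomial'': with $\bar X''$ the contraction of $C_1$, a general member $A$ of a very ample linear system on $\bar X''$ avoids the single point $p$ to which $C_1$ is contracted, so the strict transform of $A$ on $\bar X$ is a curve disjoint from $C_1$, and Proposition~\ref{polynomial-prop} then forces $g_{n+1}$ to be a polynomial.

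\textbf{The ``if'' direction.} Assume $\delta(g_{n+1})>0$ and $g_{n+1}$ is a polynomial. Since the singularities of $\bar X$ are rational, $\bar X$ is $\qq$-factorial, so $D:=d_\delta C_1+m_\delta\delta(g_{n+1})C_2$ is a $\qq$-Cartier divisor, and a direct computation with $\scrM$ gives $D\cdot C_1=0$, $D\cdot C_2=1$, $D^2=m_\delta\delta(g_{n+1})>0$, while for every other irreducible curve $C'$ on $\bar X$ we have $D\cdot C'=d_\delta(C_1\cdot C')+m_\delta\delta(g_{n+1})(C_2\cdot C')\ge 0$, with equality only if $C'$ is disjoint from $C_1\cup C_2$ --- impossible, since $\bar X\setminus(C_1\cup C_2)=\cc^2$ is affine. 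Thus $D$ is nef and big and $C_1$ is the only irreducible curve orthogonal to it. Because $\cl(\cc^2)=0$, the classes $[C_1],[C_2]$ generate $\cl(\bar X)\otimes\qq$, which is two-dimensional as $\scrM$ is nonsingular; hence $C_1^\perp$ is the line $\qq[D]$, and the curve $C$ furnished by Proposition~\ref{polynomial-prop}, being disjoint from $C_1$, has class $[C]=t[D]$ with $t=C\cdot C_2\in\qq_{>0}$ (again $C$ cannot be disjoint from all of $C_1\cup C_2$). Therefore, for a suitable $m\gg 0$, the complete linear system $|mD|$ contains an effective divisor with support $C$, so $\operatorname{Bs}|mD|\subseteq C$; on the other hand the Zariski--Fujita theorem for nef and big divisors on surfaces gives $\operatorname{Bs}|mD|\subseteq\{C':D\cdot C'=0\}=C_1$ for $m\gg 0$. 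As $C\cap C_1=\emptyset$ this yields $\operatorname{Bs}|mD|=\emptyset$, so $D$ is semiample; the morphism $\phi$ it defines is birational (since $D^2>0$), contracts exactly $C_1$, restricts to an isomorphism over $\cc^2$, and maps $\bar X$ onto a normal projective surface $\bar X'$ with $\bar X'\setminus\cc^2=\phi(C_2)$ irreducible and $\delta$ the order of pole along it. Hence $\delta$ determines an algebraic compactification.

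\textbf{Main obstacle.} The crux is this last step, and within it the semiampleness of $D$. Contracting the single negative curve $C_1$ always produces a proper normal surface, but possibly not a projective one; equivalently, a nef and big divisor whose only orthogonal curve is $C_1$ need not be semiample, and $\operatorname{Bs}|mD|$ can stabilize at $C_1$. The hypothesis that $g_{n+1}$ be a polynomial is exactly what, through Proposition~\ref{polynomial-prop}, furnishes a curve disjoint from $C_1$ that cuts the base locus down to $\emptyset$ --- equivalently, by Remark~\ref{determining-remark}, it is what makes $R^\delta$ finitely generated. The remaining ingredients I would treat as routine: the intersection computations on the singular surface $\bar X$ (alternatively, pull back to a resolution, where the place of $C_1$ is taken by the connected negative-definite configuration lying over $p$), the fact that $\phi$ restricts to an isomorphism away from $C_1$ for $m\gg 0$, and the structural identification of $\bar X''$ with the contraction of $C_1$ recorded in \cite{sub2-1}.
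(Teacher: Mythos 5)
The paper never proves Theorem \ref{algebraic-thm}: it is quoted verbatim as the main result of \cite{contractibility}, so there is no internal proof to compare your attempt against. Judged on its own, your argument is essentially sound and is very much in the spirit of the machinery the paper does develop (it runs parallel to the way $\bar X$ is used in the proofs of Theorem \ref{positive-thm} and Corollary \ref{semi-cor}): reduce everything to contractibility of $C_1$ on the two-curve compactification of Proposition \ref{compact-prop}, extract the sign condition on $\delta(g_{n+1})$ from the sign of $C_1^2$ via Grauert and Proposition \ref{last-prop}, and extract the polynomiality of $g_{n+1}$ from Proposition \ref{polynomial-prop}. Your computations with $\scrM$ are correct, as is the construction of the nef and big divisor $D$ with $D\cdot C_1=0$, $D\cdot C_2=1$, $D^2=m_\delta\delta(g_{n+1})$ and null locus exactly $C_1$.

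Two steps need tightening. First, in the ``only if'' direction you assert, citing the construction in \cite{sub2-1}, that a compactification $\bar X''$ determined by $\delta$ must be the image of a morphism from $\bar X$ contracting $C_1$. This is true but deserves an argument: resolve the birational map $\bar X\dashrightarrow\bar X''$ extending the identity of $\cc^2$; any exceptional curve of the first projection that survived to a curve of $\bar X''$ would have to dominate $C_\infty$, whose valuation is $-\delta$ and whose center on $\bar X$ is the \emph{curve} $C_2$ rather than a point, a contradiction; hence the map is a morphism, its exceptional locus is exactly $C_1$, and negative-definiteness of exceptional loci gives $C_1^2<0$, i.e.\ $\delta(g_{n+1})>0$. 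Second, the statement you attribute to Zariski--Fujita, namely $\operatorname{Bs}|mD|\subseteq\{C':D\cdot C'=0\}$ for nef and big $D$, is really the Zariski/Nakamaye description of the augmented base locus; the genuine Zariski--Fujita theorem (a nef divisor whose restriction to its stable base locus is ample is semiample) applies even more directly to your situation and lets you skip that step: you already know $\operatorname{Bs}|mD|\subseteq\supp C$ because $[C]\in\qq_{>0}[D]$, and $D$ has positive degree on every irreducible component $C'$ of $C$ (since $C'\cdot C_1=0$ forces $C'\cdot C_2>0$, as $\bar X\setminus(C_1\cup C_2)=\cc^2$ is affine), so $D|_{\supp C}$ is ample and semiampleness of $D$ follows at once. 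With these two repairs the proof is complete and self-contained modulo the cited propositions.
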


\subsection{Degree-wise Puiseux series}
Note that the proof of Theorem \ref{positive-thm} does {\em not} use the material of this subsection. Proposition \ref{g_{n+1}-prop} and Corollary \ref{ratio-cor} are used in the proof of $\delta(g_{n+1}) < 0$ case of Theorem \ref{alpha-thm}. 

\begin{defn}[Degree-wise Puiseux series] \label{dpuiseuxnition}
The field of {\em degree-wise Puiseux series} in $x$ is 
$$\dpsxc := \bigcup_{p=1}^\infty \cc((x^{-1/p})) = \left\{\sum_{j \leq k} a_j x^{j/p} : k,p \in \zz,\ p \geq 1 \right\},$$
where for each integer $p \geq 1$, $\cc((x^{-1/p}))$ denotes the field of Laurent series in $x^{-1/p}$. Let $\phi = \sum_{q \leq q_0} a_q x^{q/p}$ be degree-wise Puiseux series where $p$ is the {\em polydromy order} of $\phi$, i.e.\ $p$ is the smallest positive integer such that $\phi \in \cc((x^{-1/p}))$. Then the {\em conjugates} of $\phi$ are $\phi_j := \sum_{q \leq q_0} a_q \zeta^q x^{q/p}$, $1 \leq j \leq p$, where $\zeta$ is a primitive $p$-th root of unity. The usual factorization of polynomials in terms of Puiseux series implies the following
\end{defn}

\begin{thm} \label{dpuiseux-factorization}
Let $f \in \cc[x,y]$. Then there are unique (up to conjugacy) degree-wise Puiseux series $\phi_1, \ldots, \phi_k$, a unique non-negative integer $m$ and $c \in \cc^*$ such that
$$f = cx^m \prod_{i=1}^k \prod_{\parbox{1.75cm}{\scriptsize{$\phi_{ij}$ is a con\-ju\-ga\-te of $\phi_i$}}}\mkern-27mu \left(y - \phi_{ij}(x)\right)$$
\end{thm}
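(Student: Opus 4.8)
The plan is to deduce the statement from classical Newton--Puiseux theory over the complete field $\cc((x^{-1}))$. Put $t := x^{-1}$; then $\dpsxc = \bigcup_{p\geq 1}\cc((t^{1/p}))$ is the field of Puiseux series over $\cc$ in $t$, which is algebraically closed, and for each $p$ the extension $\cc((t^{1/p}))/\cc((t))$ is Galois with cyclic group of order $p$ generated by $t^{1/p}\mapsto \zeta_p t^{1/p}$ for a primitive $p$-th root of unity $\zeta_p$. Under the identification $t = x^{-1}$ this generator is exactly the ``conjugation'' on degree-wise Puiseux series of Definition~\ref{dpuiseuxnition}. Throughout, I would view $f$ as an element of $\dpsxc[y]$ via the inclusions $\cc[x,y] \into \cc[x,x^{-1}][y] \into \dpsxc[y]$.

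For existence I would write $f = \sum_{i=0}^{d} a_i(x) y^i$ with $a_d \neq 0$ and use algebraic closedness of $\dpsxc$ to factor $f = a_d(x)\prod_{\ell=1}^{d}(y - \psi_\ell)$ with all $\psi_\ell \in \dpsxc$; here $a_d(x)$ is a monomial $c x^m$ with $c \in \cc^*$ and $m \in \zz_{\geq 0}$ (equivalently, $f$ has no branch ``at $y = \infty$''). Because the coefficients of $f$ lie in $\cc((x^{-1}))$, the multiset $\{\psi_1,\dots,\psi_d\}$ is stable under $\mathrm{Gal}(\dpsxc/\cc((x^{-1})))$; I would then decompose it into Galois orbits, noting that the orbit of a degree-wise Puiseux series $\phi$ of polydromy order $p$ is precisely its set $\{\phi_1,\dots,\phi_p\}$ of $p$ conjugates, and let $\phi_1,\dots,\phi_k$ be one representative from each orbit. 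Collecting the corresponding factors gives $f = c x^m \prod_{i=1}^{k}\prod_{j}(y - \phi_{ij})$ as claimed.

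Uniqueness I would get from the fact that $\dpsxc[y]$ is a UFD: the decomposition of $f$ as a unit times monic linear polynomials is unique, so the multiset of roots $\{\psi_\ell\}$ and the unit $a_d(x)$ are invariants of $f$; this pins down $c$ and $m$, and the partition of $\{\psi_\ell\}$ into Galois orbits is canonical, giving uniqueness of $\phi_1,\dots,\phi_k$ up to conjugacy. The remaining point to nail down is that each Galois orbit really is the full conjugacy class of a \emph{single} degree-wise Puiseux series --- i.e.\ that if $\phi$ has polydromy order exactly $p$ then its stabilizer in $\mathrm{Gal}(\cc((x^{-1/N}))/\cc((x^{-1})))$ (for any $N$ divisible by $p$) has index $p$ and acts on the orbit through the cyclic quotient of order $p$; this is routine from the definition of polydromy order.

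The only genuinely non-formal ingredient is Newton--Puiseux (algebraic closedness of $\dpsxc$ together with the cyclic Galois structure of $\cc((x^{-1/p}))/\cc((x^{-1}))$); everything else is bookkeeping with Galois orbits. Consequently the step I expect to require the most care is not a computation but the matching of the abstract Galois action with the explicit substitution $x^{1/p}\mapsto \zeta x^{1/p}$ used to define conjugacy in the paper, together with checking that the leading coefficient $a_d(x)$ is indeed of the form $c x^m$, so that nothing is left over outside the product of the $(y-\phi_{ij})$.
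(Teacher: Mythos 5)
Your overall strategy --- identify $\dpsxc$ with the algebraic closure $\bigcup_{p\geq 1}\cc((t^{1/p}))$ of $\cc((t))$ for $t=x^{-1}$, split $f$ in $\dpsxc[y]$, and group the roots into Galois orbits, which are exactly the conjugacy classes of Definition~\ref{dpuiseuxnition} --- is precisely the ``usual factorization of polynomials in terms of Puiseux series'' that the paper invokes; the paper offers no argument beyond that phrase, so on the existence of the splitting, the orbit bookkeeping, and the uniqueness via unique factorization in $\dpsxc[y]$ your write-up is, if anything, more complete than the source.

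However, the one step you defer to the end --- ``checking that the leading coefficient $a_d(x)$ is indeed of the form $cx^m$'' --- is not a delicate verification but a false assertion in general, and this is a genuine gap. Newton--Puiseux only gives $f=a_d(x)\prod_\ell(y-\psi_\ell)$ with $a_d\in\cc[x]$ the leading coefficient of $f$ in $y$, and nothing forces $a_d$ to be a monomial: already $f=x-1$ (where the product of linear factors is empty) or $f=(x-1)y-1=(x-1)\bigl(y-\tfrac{1}{x-1}\bigr)$ cannot be written in the stated form, since the right-hand side always has $y$-leading coefficient exactly $cx^m$. Your parenthetical gloss ``equivalently, $f$ has no branch at $y=\infty$'' is in fact the missing hypothesis rather than an equivalent reformulation of something already established: the statement as given holds only for those $f$ whose $y$-leading coefficient is a monomial in $x$ (equivalently, the closure of $\{f=0\}$ in $\bar X^{(0)}$ has no branch centered at the point at infinity of the vertical lines). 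For general $f$ one must either add that hypothesis or replace $cx^m$ by $a_d(x)=c\,x^m\prod_l(x-a_l)$. Since this defect lies in the statement itself (it is harmless for the way the theorem is used later, e.g.\ in Proposition~\ref{curve-prop} and Proposition~\ref{g_{n+1}-prop}, where the relevant polynomials are monic in $y$ up to a monomial), the honest conclusion is that your argument proves the corrected statement, but as written it cannot close the gap it flags, because the claim to be checked there is not true.
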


\begin{prop}[{\cite[Theorem 1.2]{sub2-1}}] \label{valdeg}
Let $\delta$ be a divisorial semidegree on $\cc(x,y)$ such that $\delta(x) > 0$. Then there exists a {\em degree-wise Puiseux polynomial} (i.e.\ a degree-wise Puiseux series with finitely many terms) $\phi_\delta \in \dpsxc$ and a rational number $r_\delta < \ord_x(\phi_\delta)$ such that for every polynomial $f \in \cc[x,y]$, 
\begin{align}
\delta(f) = \delta(x)\deg_x\left( f(x,y)|_{y = \phi_\delta(x) + \xi x^{r_\delta}}\right), \label{phi-delta-defn}
\end{align}
where $\xi$ is an indeterminate. 
\end{prop}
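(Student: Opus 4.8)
The plan is to read off the pair $(\phi_\delta, r_\delta)$ from the key forms of $\delta$ and then to verify the substitution formula by reducing it, through the generating property \ref{generating-property}, to a statement about the graded ring of $\delta$. Since both sides of \eqref{phi-delta-defn} scale linearly in $\delta$, it is enough to work with the $\qq$-valued semidegree $\hat\delta := \delta/\delta(x)$ (so $\hat\delta(x) = 1$) and to produce $\phi_\delta, r_\delta$ with $\hat\delta(f) = \deg_x\bigl(f(x,y)|_{y = \phi_\delta(x) + \xi x^{r_\delta}}\bigr)$ for all $f \in \cc[x,y]$. Write $g_0 = x, g_1 = y, g_2, \ldots, g_{n+1}$ for the key forms (Theorem \ref{key-thm}) and $\omega_j := \delta(g_j)$.

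The first step is to construct $(\phi_\delta, r_\delta)$. When $n = 0$ (i.e.\ $\delta$ is a weighted degree) put $\phi_\delta := 0$ and $r_\delta := \omega_1/\omega_0$. In general $\phi_\delta$ is the truncation, after its $n$-th monomial, of a common degree-wise Puiseux root of the curves $\{g_2 = 0\}, \ldots, \{g_{n+1} = 0\}$: imposing $\deg_x(g_j|_{y = \phi_\delta + \xi x^{r_\delta}}) = \omega_j/\omega_0$ for $1 \le j \le n+1$ and unwinding it recursively through the relations \ref{next-property} determines $\phi_\delta = c_2 x^{e_2} + \cdots + c_{n+1}x^{e_{n+1}}$ one monomial at a time (each $c_j$ up to a choice of root, which is immaterial), and the strict inequalities $\omega_{j+1} < \alpha_j\omega_j$ of \ref{semigroup-property} are exactly what force the exponents $e_j$ to decrease strictly and push $r_\delta$ below all of them, so $\phi_\delta$ is a genuine degree-wise Puiseux polynomial and $r_\delta < e_{n+1} = \ord_x(\phi_\delta)$. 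By construction $\hat\delta(g_j) = \deg_x\bigl(g_j|_{y = \phi_\delta + \xi x^{r_\delta}}\bigr)$ for every $j$.

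Next let $\Phi : \cc[x,x^{-1},y] \to \dpsx{\cc(\xi)}{x}$ be the $\cc[x,x^{-1}]$-algebra homomorphism sending $y \mapsto \phi_\delta + \xi x^{r_\delta}$; it is injective because $\Phi(y)$ is transcendental over $\cc(x)$, so $\delta'' := \deg_x \circ \Phi$ is the pullback of a valuation, hence a semidegree with $\delta''(x) = 1$ and $\delta'' = \hat\delta$ on every $g_j$. For one inequality, write $B := \cc[x,x^{-1},y_1,\ldots,y_{n+1}]$ with its weighted degree $\omega$; if $G(x,g_1,\ldots,g_{n+1}) = g$ then each monomial of $G$ maps under $\Phi$ to a term whose $x$-degree is its $\omega$-weight over $\omega_0$, so $\delta''(g) = \deg_x\Phi(g) \le \omega(G)/\omega_0$, and minimizing over $G$ and invoking \ref{generating-property} gives $\delta''(g) \le \hat\delta(g)$. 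The reverse inequality is the crux: it says $\Phi$ induces an \emph{injective} map on the associated graded ring $\gr_\delta(\cc[x,x^{-1},y])$. By \ref{next-property} and \ref{semigroup-property} that graded ring is the binomial (semigroup) algebra generated over $\cc[x,x^{-1}]$ by the initial forms of $g_1, \ldots, g_{n+1}$ subject only to $\bar g_j^{\alpha_j} = \theta_j x^{\beta_{j,0}}\bar g_1^{\beta_{j,1}}\cdots\bar g_{j-1}^{\beta_{j,j-1}}$, $1 \le j \le n$; and $\Phi$ sends $\bar g_i$ to a monomial $\ell_i x^{\omega_i/\omega_0}$ whose coefficient was chosen to respect precisely these relations when $i \le n$, while $\ell_{n+1}$ involves the transcendental $\xi$ and so enters no relation at all. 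Hence no cancellation occurs, $\deg_x\Phi(g) = \delta(g)/\omega_0$ for all $g \in \cc[x,x^{-1},y] \supseteq \cc[x,y]$, which is the assertion.

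I expect the main obstacle to be exactly this faithfulness statement — that $\Phi$ does not collapse leading forms. Pinning it down requires using the canonical ($\deg_{y_i} < \alpha_i$) presentations supplied by \ref{generating-property} and checking that the only $\cc$-linear dependencies among the images $x^a\prod_i \Phi(g_i)^{b_i}$ of same $\omega$-weight are those dictated by the $\alpha_j$; the genericity of $\xi$, which keeps the $g_{n+1}$-slot free, is what excludes the stray dependencies, and this argument is essentially the inverse of Algorithm 3.24 of \cite{contractibility}. A more geometric alternative is to pass to the coordinates $u = 1/x$, $v = y/x^k$ of Remark \ref{key-to-key}, where $-\delta$ becomes a divisorial valuation of $\cc(u,v)$ centred at a closed point, realise that valuation as the generic curve valuation along a curvette $v = \psi(u) + \eta u^s$ via the valuative-tree description of MacLane/Favre--Jonsson, and transport the parametrisation back through $x = 1/u$, $y = v/u^k$; there the same point resurfaces as the equality between that generic curve valuation and the divisorial valuation along the exceptional divisor.
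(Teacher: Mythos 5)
The paper offers no proof of Proposition \ref{valdeg}: it is imported verbatim from \cite[Theorem 1.2]{sub2-1}, so there is nothing internal to compare your argument against. Judged on its own terms, your outline is the right one, and the easy half is correct: once $\phi_\delta$ and $r_\delta$ are chosen so that $\deg_x\bigl(g_j|_{y=\phi_\delta+\xi x^{r_\delta}}\bigr)=\omega_j/\omega_0$ for all $j$, property \ref{generating-property} immediately gives $\deg_x\circ\Phi\le\delta/\delta(x)$ pointwise exactly as you say, and the inequalities $\omega_{j+1}<\alpha_j\omega_j$ are indeed what force the recursion defining $\phi_\delta$ to produce strictly decreasing exponents, hence a genuine degree-wise Puiseux \emph{polynomial} with $r_\delta<\ord_x(\phi_\delta)$. (The bookkeeping claim that $\phi_\delta$ acquires exactly one monomial per key form beyond $g_1$ is not needed and I would not lean on it; only termination of the recursion and the final inequality on $r_\delta$ matter.)

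The one real gap is where you locate it: the reverse inequality. Your justification quotes a presentation of $\gr_\delta\cc[x,x^{-1},y]$ by the single relations $\bar g_j^{\alpha_j}=\theta_j x^{\beta_{j,0}}\bar g_1^{\beta_{j,1}}\cdots\bar g_{j-1}^{\beta_{j,j-1}}$, but that presentation is itself a theorem about key polynomials of essentially the same depth as the statement being proved, so as written this step is assumed rather than established. It can be closed elementarily: starting from a $G$ realizing the minimum in \eqref{generating-eqn}, reduce to $\deg_{y_i}G<\alpha_i$ for $1\le i\le n$ without increasing $\omega(G)$ (Euclidean division, replacing $y_i^{\alpha_i}$ by $y_{i+1}+\theta_i x^{\beta_{i,0}}\cdots y_{i-1}^{\beta_{i,i-1}}$, both of whose terms have weight at most $\alpha_i\omega_i$); note that a top-weight monomial of such a $G$ is then determined by its weight together with its $y_{n+1}$-exponent, by the minimality defining the $\alpha_i$; and observe that the leading coefficient of $\Phi(g_{n+1})$ is a polynomial of positive degree in $\xi$ (by assertion \ref{g_{n+1}-factorization} of Proposition \ref{g_{n+1}-prop} it equals $\prod(\xi-c^*)$ over the conjugate roots), while the leading coefficients of $\Phi(g_i)$ for $i\le n$ lie in $\cc^*$. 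Hence distinct top-weight monomials of $G$ map to the same power of $x$ with coefficients of distinct $\xi$-degrees and cannot cancel. You gesture at precisely this mechanism, so I would call the proposal correct in strategy but incomplete in execution at this single point; your alternative route through $(u,v)=(1/x,y/x^k)$ and the valuative-tree parametrization of divisorial valuations is presumably closer to how \cite{sub2-1} actually argues.
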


\begin{defn} \label{generic-deg-wise-defn}
If $\phi_\delta$ and $r_\delta$ are as in Proposition \ref{valdeg}, we say that $\tilde \phi_\delta(x,\xi):= \phi_\delta(x) + \xi x^{r_\delta}$ is the {\em generic degree-wise Puiseux series} associated to $\delta$.
\end{defn}

\begin{example} 
Let $(p,q)$ are integers such that $p >0$ and $\delta$ be the weighted degree on $\cc(x,y)$ corresponding to weights $p$ for $x$ and $q$ for $y$. Then $\tilde \phi_\delta = \xi x^{q/p}$ (i.e.\ $\phi_\delta = 0$). 
\end{example}

\begin{example}
Let $\delta_\epsilon$ be the semidegree from Example \ref{key-example}. Then $\tilde \phi_\delta = x^{5/2} + x^{-1} + \xi x^{-5/2}$. 
\end{example}

The following result, which is an immediate consequence of \cite[Proposition 4.2, Assertion 2]{sub2-1}, connects degree-wise Puiseux series of a semidegree with the geometry of associated compactifications. 

\begin{prop} \label{curve-prop}
Let $\delta$, $\bar X$, $C_1$, $C_2$ be as in Proposition \ref{compact-prop} and let $\tilde \phi_\delta(x,\xi):= \phi_\delta(x) + \xi x^{r_\delta}$ be the generic degree-wise Puiseux series associated to $\delta$. Assume in addition that $\delta$ is {\em not} a weighted degree, i.e.\ $\phi_\delta(x) \neq 0$. Pick $f \in \cc[x,y]\setminus \{0\}$ and let $C_f$ be the curve on $\bar X$ which is the closure of the curve defined by $f$ on $\cc^2$. Then $C_f \cap C_1 = \emptyset$ iff the degree-wise Puiseux factorization of $f$ is of the form 
\begin{align}
\begin{aligned}
&f = \prod_{i=1}^k \prod_{\parbox{1.75cm}{\scriptsize{$\phi_{ij}$ is a con\-ju\-ga\-te of $\phi_i$}}}\mkern-27mu \left(y - \phi_{ij}(x)\right),\quad \text{where each $\phi_i$ satisfies} \\
&\phi_i(x) - \phi_\delta(x) = c_i x^{r_\delta} + \lot 
\end{aligned} \label{minimal-expansion}
\end{align}
for some $c_i \in \cc$ (where $\lot$ denotes lower order terms in $x$). 
\end{prop}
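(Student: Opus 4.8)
The plan is to read off the statement from the explicit birational model realizing $\bar X$, which is the content of \cite[Proposition 4.2, Assertion 2]{sub2-1}. Recall that $\bar X$ dominates $\bar X^{(0)}\cong\pp^2$, with $C_1$ the strict transform of the line at infinity $L_\infty:=\bar X^{(0)}\setminus X$ and $C_2$ (corresponding to $\delta$) lying over $L_\infty$; moreover \cite[Proposition 4.2, Assertion 2]{sub2-1} identifies the chain of point blow-ups resolving $\bar X\dashrightarrow\bar X^{(0)}$ with the sequence of infinitely near points of $L_\infty$ determined by the generic branch $\{y=\phi_\delta(x)+\xi x^{r_\delta}\}$, so that a curve on $\bar X$ is disjoint from $C_1$ precisely when each of its branches at infinity passes through every centre of that chain. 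The first step is therefore to translate ``passes through every centre'' into a condition on Puiseux expansions: by the standard correspondence between infinitely near points of $L_\infty$ and truncations of degree-wise Puiseux series --- set up most transparently after the coordinate change $u=1/x$, $v=y/x^k$ of Remark \ref{key-to-key} --- a branch $y=\phi(x)$ passes through all the centres iff its expansion agrees with that of $\phi_\delta$ in every term of exponent exceeding $r_\delta$, i.e.\ iff $\phi-\phi_\delta=c\,x^{r_\delta}+\lot$ for some $c\in\cc$; here the genericity of $\xi$ together with the inequality $r_\delta<\ord_x\phi_\delta$ of Proposition \ref{valdeg} is what puts $C_2$, not $C_1$, on the generic branch, and so fixes the cut-off at exponent $r_\delta$.

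With this dictionary in hand, write the degree-wise Puiseux factorization $f=c\,x^m\prod_i\prod_j(y-\phi_{ij})$ from Theorem \ref{dpuiseux-factorization}. The scalar $c$ is irrelevant to $C_f$. If $m>0$, then $\overline{\{x=0\}}$ is a component of $C_f$, and from $\Div_{\bar X}(x)=\overline{\{x=0\}}-C_1-\delta(x)C_2$ and the inverse intersection matrix $\scrM$ of Proposition \ref{compact-prop} --- which has negative determinant because $\delta\ne\deg$ (Proposition \ref{last-prop}) --- one computes $(\overline{\{x=0\}}\cdot C_1)_{\bar X}>0$, so $C_f$ meets $C_1$; hence we may assume $m=0$. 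Then $C_f$ is the union of the closures of the branches $y=\phi_{ij}(x)$; branches in a single conjugacy class meet $C_1$ together, so by the dictionary $C_f\cap C_1=\emptyset$ iff for each $i$ some conjugate of $\phi_i$ --- equivalently, after choosing the representatives suitably, $\phi_i$ itself --- satisfies $\phi_i-\phi_\delta=c_ix^{r_\delta}+\lot$, which is \eqref{minimal-expansion}.

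The main obstacle is making the dictionary of the first paragraph precise: one has to keep track of which affine chart each successive blow-up of \cite{sub2-1} lives in, how the exponents appearing in $\phi_\delta$ (and the terminal exponent $r_\delta$) correspond to the infinitely near points, and the mild point that \eqref{minimal-expansion} is not invariant under replacing a $\phi_i$ by a conjugate and so must be read as a condition on one representative of each conjugacy class. As a numerical check on the final equivalence one can, using Proposition \ref{valdeg} to compute $\delta(f)$, the formula $\deg(f)=\sum_{ij}\max(1,\deg_x\phi_{ij})$, and the identities $d_\delta=\delta(x)\max(1,\deg_x\phi_\delta)$ and $m_\delta=\delta(x)/p$ ($p$ the polydromy order of $\phi_\delta$), verify that \eqref{minimal-expansion} is equivalent to $d_\delta\,\delta(f)=m_\delta\,\delta(g_{n+1})\,\deg(f)$, which via the identity $(C_f\cdot C_1)_{\bar X}=\tfrac1{D}\bigl(d_\delta\,\delta(f)-m_\delta\,\delta(g_{n+1})\,\deg(f)\bigr)$ with $D:=d_\delta^2-m_\delta\,\delta(g_{n+1})>0$ is in turn equivalent to $C_f\cap C_1=\emptyset$.
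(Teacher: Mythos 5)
The paper does not actually prove this proposition: it is recorded as an ``immediate consequence'' of \cite[Proposition 4.2, Assertion 2]{sub2-1}, so there is no internal argument to compare yours against, and what you are really attempting is a proof of (a special case of) the cited assertion. Judged on its own terms, your primary route --- the dictionary between infinitely near points over $L_\infty$ and truncations of degree-wise Puiseux series --- is the natural one, but it has a genuine gap exactly where you flag the ``main obstacle''. The criterion ``each branch at infinity passes through every centre of the chain'' characterizes the branches whose strict transforms on the \emph{resolution} meet the boundary only along the last exceptional curve $\tilde C_2$; it is not a priori equivalent to $C_f\cap C_1=\emptyset$ on $\bar X$, because $\bar X$ is obtained by contracting all the intermediate exceptional curves. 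After contraction, two curves disjoint upstairs meet on $\bar X$ whenever they both touch the same connected component of the contracted configuration, and conversely a branch that leaves the chain of centres early can still miss $C_1$ if it lands on a contracted component not adjacent to the strict transform of $L_\infty$ (for instance a component hanging off $\tilde C_2$ on the far side from $\tilde C_1$, which arises when the last centre is a satellite point). Making the equivalence precise therefore requires an analysis of which components of the contracted dual graph are adjacent to $\tilde C_1$, and of where the branches with $\deg_x(\phi_i-\phi_\delta)$ near the cut-off $r_\delta$ land; none of this is addressed. Also, for the $x^m$ factor, the inequality $\delta(x)d_\delta>m_\delta\delta(g_{n+1})$ you need does not follow from the negativity of $\det\scrM$ alone when $\delta(x)<d_\delta$; it is true, but needs a separate line (e.g.\ via \eqref{alpha-delta} and Lemma \ref{ratio-lemma} applied to $f=x$).

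Ironically, your closing ``numerical check'' is the cleanest complete proof available from the paper's own toolkit, and I would promote it to the main argument: $C_f$ is linearly equivalent to $\deg(f)C_1+\delta(f)C_2$, so \eqref{intersection-matrix} gives $(C_f\cdot C_1)=\bigl(d_\delta\,\delta(f)-m_\delta\,\delta(g_{n+1})\deg(f)\bigr)/D$ with $D=d_\delta^2-m_\delta\delta(g_{n+1})>0$ by Proposition \ref{last-prop} (strictly, since $\delta$ is not a weighted degree, hence $\delta\neq\deg$); since Mumford's local intersection numbers of distinct curves are positive, $C_f\cap C_1=\emptyset$ iff this vanishes, i.e.\ iff $\delta(f)/\deg(f)=m_\delta\delta(g_{n+1})/d_\delta$, which by \eqref{alpha-delta} equals $\delta(g_{n+1})/\deg(g_{n+1})$, and by the equality case of Lemma \ref{ratio-lemma} this happens precisely when $f$ factors as in \eqref{minimal-expansion}. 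Lemma \ref{ratio-lemma} is logically independent of Proposition \ref{curve-prop} in the paper (it is derived from Proposition \ref{g_{n+1}-prop} by induction on Puiseux pairs), so there is no circularity, and this route lets you discard the blow-up dictionary entirely.
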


The following result gives some relations between degree-wise Puiseux series and key forms of semidegrees, and follows from standard properties of key polynomials (in particular, the first 3 assertions follow from \cite[Proposition 3.28]{contractibility} and the last assertion follows from the first; a special case of the last assertion (namely the case that $\delta(y) \leq \delta(x)$) was proved in \cite[Identity (4.6)]{sub2-1}). 

\begin{prop} \label{g_{n+1}-prop}
Let $\delta$ be a divisorial semidegree on $\cc(x,y)$ such that $\delta(x) > 0$. Let $\tilde \phi_\delta(x,\xi):= \phi_\delta(x) + \xi x^{r_\delta}$ be the generic degree-wise Puiseux series associated to $\delta$ and $g_0, \ldots, g_{n+1}$ be the key forms of $\delta$.
Then 
\begin{enumerate}
\item \label{g_{n+1}-factorization} There is a degree-wise Puiseux series $\phi$ with
$$\phi(x) - \phi_\delta(x) = c x^{r_\delta} + \lot$$
for some $c \in \cc$ (where $\lot$ denotes lower order terms in $x$) such that the degree-wise Puiseux factorization of $g_{n+1}$ is of the form 
\begin{align}
g_{n+1} &= \prod_{\parbox{1.5cm}{\scriptsize{$\phi^*$ is a con\-ju\-ga\-te of $\phi$}}}\mkern-27mu \left(y - \phi^*(x)\right).
\end{align}
\item \label{g_{n+1}-deg} Let the {\em Puiseux pairs} \cite[Definition 3.11]{contractibility} of $\phi_\delta$ be $(q_1, p_1), \ldots, (q_l,p_l)$ (if $\phi_\delta \in \cc((1/x))$, then simply set $l=0$). Set $p_0 := 1$. Then 
\begin{align*}
\deg(g_{n+1}) = \begin{cases}
					1 				 								& \text{if $\phi_\delta = 0$,} \\
					\max\{1, \deg_x(\phi_\delta)\}p_0p_1 \cdots p_l & \text{otherwise.}
				\end{cases}
\end{align*}
\item \label{d-m-delta} Write $r_\delta$ as $r_{\delta} = q_{l+1}/(p_0 \cdots p_l p_{l+1})$, where $p_{l+1}$ is the smallest integer $\geq 1$ such that $p_0 \cdots p_l p_{l+1}r_\delta$ is an integer. Let $d_\delta$ and $m_\delta$ be as in respectively \eqref{d-delta} and \eqref{m-delta}. Then 
\begin{align*}
m_\delta &= p_{l+1},\\
d_\delta &= \begin{cases}
					\max\{p_1,q_1\}	  									& \text{if $\phi_\delta = 0$,}\\
					\max\{1, \deg_x(\phi_\delta)\}p_0p_1 \cdots p_{l+1} & \text{otherwise.}
				\end{cases}
\end{align*}
\item Let the skewness $\alpha(\delta)$ of $\delta$ be defined as in footnote \ref{alphanote}. Then
\begin{align}
\begin{aligned}
\alpha(\delta) &= m_\delta\delta(g_{n+1})/d_\delta^2 		
				= 	\begin{cases}
						\frac{\min\{p_1,q_1\}}{\max\{p_1,q_1\}}  = \min\{\delta(x),\delta(y)\}/d_\delta	& \text{if $\phi_\delta = 0$,}\\
						\frac{\delta(g_{n+1})}{d_\delta \deg(g_{n+1})} 									& \text{otherwise.}
					\end{cases}
\end{aligned} \label{alpha-delta}
\end{align}
\end{enumerate}
\end{prop}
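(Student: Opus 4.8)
The plan is to transport the whole statement to the local coordinates at the center of $\nu := -\delta$, where the key forms become classical key polynomials and $\tilde\phi_\delta$ becomes a generic Puiseux parametrization, and then to read off all four assertions from standard properties of key polynomials via the dictionary \eqref{form-from-pol} of Remark \ref{key-to-key}. Concretely, choose $k$ with $\delta(y) < k\delta(x)$ and set $u := 1/x$, $v := y/x^k$; then $\nu$ is a divisorial valuation of $\cc(u,v)$ centered at the origin with key polynomials $\tilde g_0 = u$, $\tilde g_1 = v$, $\tilde g_2, \ldots, \tilde g_{n+1}$ related to the $g_j$ by \eqref{form-from-pol}, and in these coordinates the generic parametrization $y = \tilde\phi_\delta(x,\xi) = \phi_\delta(x) + \xi x^{r_\delta}$ becomes $v = \psi(u) + \xi u^{s}$ with $\psi(u) := u^k\phi_\delta(1/u)$ and $s := k - r_\delta$. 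The conditions $\delta(y) < k\delta(x)$ and $r_\delta < \ord_x(\phi_\delta)$ ensure $\ord_u(\psi) > 0$ and that $s$ is strictly larger than every exponent of $\psi$, so $v = \psi(u) + \xi u^s$ is a bona fide generic Puiseux parametrization of $\nu$, and its Puiseux pairs are obtained from those of $\phi_\delta$ by the same change of variable. The degenerate case $\phi_\delta = 0$ (equivalently $\delta$ a weighted degree, so $n = 0$ and $g_{n+1} = y$) is trivial and is dispatched separately.

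For assertion \ref{g_{n+1}-factorization} I would invoke the structure of the last key polynomial, which is \cite[Proposition 3.28]{contractibility}: $\tilde g_{n+1}$ is monic in $v$ and its roots, viewed as Puiseux series in $u$, form a single conjugacy class represented by a series that agrees with $\psi(u)$ at every exponent below $s$ and carries some fixed coefficient $c$ at the exponent $s$. Pushing this factorization of $\tilde g_{n+1}$ through \eqref{form-from-pol} turns it into $g_{n+1} = \prod_{\phi^*}(y - \phi^*(x))$, the product running over the conjugates of a single degree-wise Puiseux series $\phi$, and the agreement statement for $\psi$ becomes exactly $\phi(x) - \phi_\delta(x) = c x^{r_\delta} + \lot$.

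Assertions \ref{g_{n+1}-deg} and \ref{d-m-delta} are then combinatorial consequences. By \ref{g_{n+1}-factorization}, $\deg_y(g_{n+1})$ — which equals $\deg_v(\tilde g_{n+1}) = \prod_{j}\alpha_j$ — is the polydromy order of $\phi$, and the standard structure of key polynomials identifies $\prod_j\alpha_j$ with $p_0 p_1\cdots p_l$; expanding $\prod_{\phi^*}(y - \phi^*)$ as in Theorem \ref{dpuiseux-factorization} and tracking the top $x$-exponents of the elementary symmetric functions of the conjugates then gives $\deg(g_{n+1}) = \max\{1, \deg_x(\phi_\delta)\}\,p_0 p_1\cdots p_l$. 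For \ref{d-m-delta}, Proposition \ref{valdeg} gives $\delta(g_j) = \delta(x)\deg_x\big(g_j|_{y = \tilde\phi_\delta}\big)$; the semigroup property \ref{semigroup-property} shows that the group generated by $\delta(g_0),\ldots,\delta(g_n)$ is the value group computed from the Puiseux pairs in the usual way, and since $\delta$, being a divisorial semidegree, has value group $\zz$, this forces $\delta(x) = p_0 p_1\cdots p_{l+1}$, whence $m_\delta = \gcd(\delta(g_0),\ldots,\delta(g_n)) = p_{l+1}$ and $d_\delta = \max\{\delta(x),\delta(y)\} = \delta(x)\max\{1,\deg_x(\phi_\delta)\} = \max\{1,\deg_x(\phi_\delta)\}\,p_0\cdots p_{l+1}$, which reads $\max\{p_1,q_1\}$ when $\phi_\delta = 0$.

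Finally, for assertion (4) I would take the left-hand equality of \eqref{alpha-delta} as the thing to establish: $m_\delta\delta(g_{n+1})/d_\delta^2$ is precisely the off-diagonal-to-diagonal ratio built into the matrix $\scrM$ of Proposition \ref{compact-prop}, and $\alpha(\delta) = m_\delta\delta(g_{n+1})/d_\delta^2$ is the general form of \cite[Identity (4.6)]{sub2-1}, obtained by unwinding the definition of skewness of \cite{favsson-eigen} applied to $-\delta/d_\delta$; the two explicit evaluations then follow by substitution, using $\delta(g_{n+1}) = \delta(y)$, $m_\delta = \delta(x)$, $d_\delta = \max\{\delta(x),\delta(y)\}$ when $\phi_\delta = 0$, and combining the formulas of \ref{g_{n+1}-deg} and \ref{d-m-delta} to get $m_\delta/d_\delta^2 = 1/(d_\delta\deg(g_{n+1}))$ when $\phi_\delta \neq 0$. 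The step I expect to be the main obstacle is the denominator bookkeeping in \ref{g_{n+1}-deg}--\ref{d-m-delta}: correctly propagating polydromy orders and the extra denominator $p_{l+1}$ through the substitution $x \leftrightarrow 1/x$, $y \leftrightarrow y/x^k$ and through the semigroup property, and pinning down the normalization under which the bare combinatorial expressions hold verbatim. The only genuinely non-elementary input — the precise shape of the roots of the last key polynomial needed for \ref{g_{n+1}-factorization} — is available off the shelf from \cite[Proposition 3.28]{contractibility}, so no new hard analysis is required.
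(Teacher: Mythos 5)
Your proposal is correct and follows essentially the same route as the paper, which proves Proposition \ref{g_{n+1}-prop} by exactly the citations you use: the first three assertions are read off from \cite[Proposition 3.28]{contractibility} (via the local/global dictionary of Remark \ref{key-to-key}), and the last assertion is deduced from the first together with the definition of skewness, generalizing \cite[Identity (4.6)]{sub2-1}. Your additional bookkeeping (the change of variables $u=1/x$, $v=y/x^k$, the identification $\delta(x)=p_0\cdots p_{l+1}$ from the value group being $\zz$, and the cancellation $d_\delta=m_\delta\deg(g_{n+1})$) correctly fills in the details the paper leaves implicit.
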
 

The following lemma is a consequence of Assertion \ref{g_{n+1}-factorization} of Proposition \ref{g_{n+1}-prop} and the definition of generic degree-wise Puiseux series of a semidegree. It follows via a straightforward, but cumbersome induction on the number of {\em Puiseux pairs} of the {\em degree-wise Puiseux roots} of $f$, and we omit the proof.

\begin{lemma} \label{ratio-lemma}
Let $\delta$ be a divisorial semidegree on $\cc(x,y)$ such that $\delta(x) > 0$. Let $\tilde \phi_\delta(x,\xi):= \phi_\delta(x) + \xi x^{r_\delta}$ be the generic degree-wise Puiseux series associated to $\delta$ and $g_0, \ldots, g_{n+1}$ be the key forms of $\delta$. Then for all $f \in \cc[x,y]\setminus \cc$, 
\begin{align}
\frac{\delta(f)}{\deg(f)} \geq \frac{\delta(g_{n+1})}{\deg(g_{n+1})}. \label{delta-deg-ratio-comparison}
\end{align}
Now assume in addition that $\delta$ is {\em not} a weighted degree, i.e.\ $\phi_\delta(x) \neq 0$. Then the \eqref{delta-deg-ratio-comparison} holds with equality iff $f$ has a degree-wise Puiseux factorization as in \eqref{minimal-expansion}.
\end{lemma}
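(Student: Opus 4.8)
The plan is to read both $\delta(f)$ and $\deg(f)$ off intersection theory on the two‑component compactification attached to $\delta$, so that the statement collapses to the arithmetic of a $2\times2$ matrix. First I would set aside the trivial case $\delta=\deg$ (both sides of \eqref{delta-deg-ratio-comparison} are then $1$) and otherwise take $\bar X$, $C_1$, $C_2$ from Proposition~\ref{compact-prop}. Since $\bar X$ has only rational singularities it is $\qq$-factorial, so its Weil divisors are $\qq$-Cartier; moreover $\cl(\bar X)$ is generated by $[C_1],[C_2]$ (because $\cl(\cc^2)=0$), and these are numerically independent since $M:=\bigl((C_i,C_j)\bigr)_{1\le i,j\le2}$ is invertible with inverse $\scrM$. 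For a non-constant $f\in\cc[x,y]$ let $C_f$ be the closure in $\bar X$ of $\{f=0\}\cap\cc^2$: a nonzero effective divisor, not containing $C_1$ or $C_2$, whose support must meet $C_1\cup C_2=\bar X\setminus\cc^2$ (a positive-dimensional complete curve cannot lie inside $\cc^2$). As the semidegrees of $C_1,C_2$ are $\deg$ and $\delta$, we have $\Div(f)=C_f-\deg(f)\,C_1-\delta(f)\,C_2\sim0$; intersecting with $C_1$ and with $C_2$ gives $M\binom{\deg(f)}{\delta(f)}=\binom{a}{b}$, where $a:=(C_f,C_1)\ge0$, $b:=(C_f,C_2)\ge0$ and $a+b>0$, so applying $\scrM$ yields $\deg(f)=a+d_\delta b$ and $\delta(f)=d_\delta a+m_\delta\delta(g_{n+1})\,b$.

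From this the inequality is one line:
\begin{align*}
d_\delta\,\delta(f)-m_\delta\delta(g_{n+1})\,\deg(f)=a\bigl(d_\delta^2-m_\delta\delta(g_{n+1})\bigr)\ge0
\end{align*}
by Proposition~\ref{last-prop}, and $\deg(f)=a+d_\delta b>0$, whence $\delta(f)/\deg(f)\ge m_\delta\delta(g_{n+1})/d_\delta$. By \eqref{alpha-delta} this lower bound equals $\delta(g_{n+1})/\deg(g_{n+1})$ whenever $\phi_\delta\ne0$, which gives \eqref{delta-deg-ratio-comparison} in that case; the remaining weighted-degree case $\phi_\delta=0$ (where $g_{n+1}\in\{x,y\}$) reduces to an elementary estimate on monomials. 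For the equality clause, assume $\phi_\delta\ne0$; then $d_\delta^2-m_\delta\delta(g_{n+1})>0$ strictly (Proposition~\ref{last-prop}, as $\delta\ne\deg$), so equality in \eqref{delta-deg-ratio-comparison} holds precisely when $a=(C_f,C_1)=0$, i.e.\ when $C_f\cap C_1=\emptyset$; by Proposition~\ref{curve-prop} this is exactly the condition that the degree-wise Puiseux factorization of $f$ have the form \eqref{minimal-expansion}.

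On this route the only non-formal ingredients are the $\qq$-factoriality of $\bar X$ (which legitimizes the divisor-class bookkeeping) and Proposition~\ref{curve-prop}, and I do not expect a serious obstacle. The elementary alternative — presumably the ``straightforward but cumbersome induction on the number of Puiseux pairs'' alluded to in the paper — would instead factor $f$ via Theorem~\ref{dpuiseux-factorization}, use that $\delta$ and $\deg$ are additive on products together with the mediant inequality $\tfrac{\sum a_i}{\sum b_i}\ge\min_i\tfrac{a_i}{b_i}$ (for $b_i>0$) to reduce to $f=x$ and to $f=\prod_{\phi^*\sim\phi}(y-\phi^*)$ for a single degree-wise Puiseux series $\phi$, evaluate $\delta$ of the latter from \eqref{phi-delta-defn} as $\delta(x)\sum_{\phi^*\sim\phi}\max\{r_\delta,\deg_x(\phi_\delta-\phi^*)\}$ and its degree from the Newton polygon, and then induct on the number of Puiseux pairs of $\phi$, stripping them off one at a time and comparing at each stage with the Puiseux root of $g_{n+1}$ supplied by assertion~\ref{g_{n+1}-factorization} of Proposition~\ref{g_{n+1}-prop}. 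On that route the inductive bookkeeping across the coordinate changes that peel off a Puiseux pair, together with the handling of degenerate configurations (a constant $\phi$, $\deg_x\phi<1$, $\phi_\delta=0$, a monomial factor in $f$, sign changes of $\delta$), is exactly the ``cumbersome'' part and would be the main obstacle.
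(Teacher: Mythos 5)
The paper never actually writes this proof out: it only records that the lemma follows from Assertion \ref{g_{n+1}-factorization} of Proposition \ref{g_{n+1}-prop} by the induction on Puiseux pairs that you describe at the end as the ``elementary alternative''. Your main argument is therefore a genuinely different route, and in the essential case it is correct. After discarding $\delta=\deg$, writing $\Div(f)=C_f-\deg(f)C_1-\delta(f)C_2$ on the surface $\bar X$ of Proposition \ref{compact-prop}, pairing with $C_1,C_2$ (legitimate in Mumford's intersection theory; the paper itself invokes the $\qq$-Cartier property of the boundary curves via rationality of the singularities), and inverting with $\scrM$ gives $\deg(f)=a+d_\delta b$ and $\delta(f)=d_\delta a+m_\delta\delta(g_{n+1})b$ with $a=(C_f,C_1)\geq 0$, $b=(C_f,C_2)\geq 0$; Proposition \ref{last-prop} then yields $\delta(f)/\deg(f)\geq m_\delta\delta(g_{n+1})/d_\delta$, with equality iff $a=0$, i.e.\ iff $C_f\cap C_1=\emptyset$, and Assertions \ref{g_{n+1}-deg} and \ref{d-m-delta} of Proposition \ref{g_{n+1}-prop} (equivalently \eqref{alpha-delta}) identify this bound with $\delta(g_{n+1})/\deg(g_{n+1})$ when $\phi_\delta\neq 0$, while Proposition \ref{curve-prop} translates $C_f\cap C_1=\emptyset$ into \eqref{minimal-expansion}. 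There is no circularity, since Propositions \ref{compact-prop}, \ref{curve-prop} and \ref{g_{n+1}-prop} are imported from \cite{sub2-1} and \cite{contractibility} independently of this lemma; your argument is very much in the spirit of the paper's own proofs of Theorems \ref{positive-thm} and \ref{alpha-thm}, where \eqref{delta-deg-ratio} is extracted from the nef cone. What you gain is that the cumbersome induction disappears; what the paper's intended route buys is a proof purely at the level of degree-wise Puiseux series, independent of the compactification machinery.

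The one genuine weak spot is the weighted-degree case $\phi_\delta=0$, which you wave off as ``an elementary estimate on monomials''. There the key forms are $x,y$, so $g_{n+1}=y$ and the right-hand side of \eqref{delta-deg-ratio-comparison} is $\delta(y)$; if $\delta(y)>\delta(x)>0$ the asserted inequality fails for $f=x$, so no monomial estimate can establish it. What your intersection computation actually proves, namely $\delta(f)/\deg(f)\geq m_\delta\delta(g_{n+1})/d_\delta$ (which equals $\min\{\delta(x),\delta(y)\}$ in this case, cf.\ \eqref{alpha-delta}), is the correct uniform bound, and it coincides with $\delta(g_{n+1})/\deg(g_{n+1})$ precisely when $\phi_\delta\neq 0$ or $\delta(y)\leq\delta(x)$. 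So this is really an edge case of the lemma's statement rather than a defect of your main argument, and it is harmless for the paper (its applications of the lemma either exclude weighted degrees or occur when $\delta(g_{n+1})<0$, in which case a weighted degree has $\delta(y)<0<\delta(x)$); but as written your proposal asserts something false in that subcase, so you should either restrict the monomial estimate to $\delta(y)\leq\delta(x)$ or state the bound in the form $m_\delta\delta(g_{n+1})/d_\delta$ throughout.
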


Combining Propositions \ref{polynomial-prop} and \ref{curve-prop} and Lemma \ref{ratio-lemma} yields the following

\begin{cor} \label{ratio-cor}
Consider the set-up of Proposition \ref{polynomial-prop}. assume in addition that $\delta$ is {\em not} a weighted degree. Then the Assertions 1 to 3 of Proposition \ref{polynomial-prop} are equivalent to the following statement
\begin{enumerate}
\setcounter{enumi}{3}
\item There exists $f \in \cc[x,y] \setminus \cc$ which satisfies \eqref{delta-deg-ratio-comparison} with equality.
\end{enumerate} 
\end{cor}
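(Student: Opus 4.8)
The plan is to prove the chain of equivalences by folding the new statement (Assertion 4) into the cycle of equivalences of Proposition \ref{polynomial-prop}. Concretely, I would show $(3) \Rightarrow (4)$ and $(4) \Rightarrow (1)$ (or $(4)\Rightarrow(2)$), using Proposition \ref{curve-prop} as the bridge between the geometry of $\bar X$ and the shape of degree-wise Puiseux factorizations, and Lemma \ref{ratio-lemma} as the bridge between those factorizations and the ratio $\delta(f)/\deg(f)$.

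First, for $(3)\Rightarrow(4)$: assuming $g_{n+1}$ is a polynomial, take $f := g_{n+1} \in \cc[x,y]\setminus\cc$. By Assertion \ref{g_{n+1}-factorization} of Proposition \ref{g_{n+1}-prop}, $g_{n+1}$ has a degree-wise Puiseux factorization in which the (single, up to conjugacy) root $\phi$ satisfies $\phi(x) - \phi_\delta(x) = cx^{r_\delta} + \lot$; this is exactly the form \eqref{minimal-expansion}. Since $\delta$ is assumed not to be a weighted degree (so $\phi_\delta \neq 0$), Lemma \ref{ratio-lemma} then gives that $f = g_{n+1}$ attains equality in \eqref{delta-deg-ratio-comparison}, which is precisely statement $(4)$.

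Next, for $(4)\Rightarrow(1)$: suppose $f \in \cc[x,y]\setminus\cc$ attains equality in \eqref{delta-deg-ratio-comparison}. By the equality clause of Lemma \ref{ratio-lemma} (again using $\phi_\delta \neq 0$), $f$ has a degree-wise Puiseux factorization of the form \eqref{minimal-expansion}. Now apply Proposition \ref{curve-prop}: letting $C_f$ be the closure in $\bar X$ of the affine curve $\{f=0\}$, the condition \eqref{minimal-expansion} is exactly the criterion for $C_f \cap C_1 = \emptyset$. Hence there is a compact algebraic curve on $\bar X$ disjoint from $C_1$, which is statement $(1)$ of Proposition \ref{polynomial-prop}. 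Combined with the already-established equivalence of $(1)$, $(2)$, $(3)$ in that proposition, this closes the loop and proves that all four statements are equivalent.

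The only mild subtlety I anticipate is bookkeeping around the nonconstancy of $f$ and the exclusion of the weighted-degree case: Lemma \ref{ratio-lemma}'s equality characterization is stated precisely under $\phi_\delta \neq 0$, which is the standing hypothesis of the corollary, so nothing extra is needed there; and $f = g_{n+1}$ is genuinely nonconstant because $g_{n+1}$ has positive degree (it has a nontrivial Puiseux factorization in $y$ by Assertion \ref{g_{n+1}-factorization}, hence $\deg(g_{n+1}) \geq 1$). So there is no real obstacle — the work is entirely in correctly invoking Propositions \ref{polynomial-prop}, \ref{curve-prop}, \ref{g_{n+1}-prop} and Lemma \ref{ratio-lemma}, all of which are available.
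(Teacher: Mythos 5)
Your proposal is correct and matches the paper's (very terse) proof, which simply combines Propositions \ref{polynomial-prop} and \ref{curve-prop} with Lemma \ref{ratio-lemma}; your only slight variation is invoking Assertion \ref{g_{n+1}-factorization} of Proposition \ref{g_{n+1}-prop} directly for the direction $(3)\Rightarrow(4)$, where one could equivalently pass through Proposition \ref{curve-prop} applied to the curve produced by Assertion 1 of Proposition \ref{polynomial-prop}. All hypotheses (nonconstancy of $f$, $\phi_\delta \neq 0$) are handled correctly.
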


\section{Proofs}

\begin{proof}[Proof of Theorem \ref{positive-thm}]
W.l.o.g.\ we may (and will) assume that $\delta \neq \deg$. Let $\bar X$ be the projective compactification of $X$ from Proposition \ref{compact-prop}. In the notations of Proposition \ref{compact-prop}, the matrix of intersection numbers $(C_i,C_j)$ of $C_i$ and $C_j$, $1 \leq i,j \leq 2$, is:
\begin{align}
\scrI = \frac{1}{d_\delta^2 - m_\delta \delta(g_{n+1})}
		\begin{pmatrix}
			-m_\delta\delta\left(g_{n+1}\right)	& d_\delta \\
			d_\delta							& -1
		\end{pmatrix} \label{intersection-matrix}
\end{align} 
We consider the 3 possibilities of the sign of $\delta(g_{n+1})$ separately:

\paragraph{Case 1: $\delta(g_{n+1}) > 0$.} In this case \eqref{d-m-delta-ineq} and \eqref{intersection-matrix} imply that $(C_1,C_1) < 0$, so $C_0$ is {\em contractible} by a criterion of Grauert \cite[Theorem 14.20]{badescu}, i.e.\ there is a map $\pi: \bar X \to \bar X'$ of normal analytic surfaces such that $\pi(C_1)$ is a point and $\pi|_{\bar X \setminus C_1}$ is an isomorphism. In particular $\delta$ is the pole along the irreducible curve at infinity on the compactification $\bar X'$ of $X := \cc^2$. Consequently $\delta$ is {\em positive} on all non-constant polynomials in $\cc[x,y]$.

\paragraph{Case 2: $\delta(g_{n+1}) = 0$.} In this case $(C_1, C_1) = 0$. Note that $C_1$ is a $\qq$-Cartier divisor, since all singularities of $\bar X$ are rational. It follows that $C_1$ is a {\em nef} $\qq$-Cartier divisor. Consequently there can not be any {\em effective} curve $C$ on $\bar X$ linearly equivalent to $a_1C_1 + a_2C_2$ with $a_2 < 0$, for in that case $(C_1, C) = a_2(C_1,C_2) < 0$, which is impossible. Since the curve determined by a polynomial $f \in \cc[x,y]$ is linearly equivalent to $\deg(f)C_1 + \delta(f)C_2$, it follows that $\delta(f) \geq 0$ for all $f \in \cc[x,y]$.

\paragraph{Case 3: $\delta(g_{n+1}) < 0$.} In this case $(C_1, C_1) > 0$. It follows that $C_1$ is in the {\em interior} of the {\em cone of curves} on $\bar X$ \cite[Lemma II.4.12]{kollar-rational-curves}\footnote{Even though \cite[Lemma II.4.12]{kollar-rational-curves} is proved for only {\em non-singular} surfaces, its proof goes through for arbitrary normal surfaces using the intersection theory due to \cite{mumford-normal}.}, and consequently there are effective curves of the form $C := a_1C_1 - a_2 C_2$ with $a_1, a_2 \in \zz_{\geq 0}$ such that $a_2/a_1$ is sufficiently small. But such a curve is the closure in $\bar X$ of the curve on $\cc^2$ defined by some $f \in \cc[x,y]$ such that $\deg(f) = a_1$ and $\delta(f) = -a_2$. In particular, $\delta(f) < 0$, as required. \\

Assertion \ref{non-positive-assertion} of Theorem \ref{positive-thm} follows from Proposition \ref{polynomial-prop} and the conclusions of the above 3 cases. Assertion \ref{positive-assertion} follows from Assertion \ref{non-positive-assertion}, Proposition \ref{polynomial-prop}, and the observation from \eqref{intersection-matrix} that if $\delta(g_{n+1}) = 0$, then for every $f \in \cc[x,y]$, $\delta(f) = 0$ iff the closure in $\bar X$ of the curve on $\cc^2$ defined by $f$ does not intersect $C_1$.
\end{proof}

\begin{proof}[Proof of Theorem \ref{alpha-thm}]
W.l.o.g.\ we may (and will) assume that $\delta \neq \deg$. Let $\bar X$ be the projective compactification of $X$ from Proposition \ref{compact-prop}. We continue to use the notation of Proposition \ref{compact-prop} and divide the proof into separate cases depending on $\delta(g_{n+1})$. 

\paragraph{Case 1: $\delta(g_{n+1}) \geq 0$.} In this case Assertion \ref{non-positive-assertion} of Theorem \ref{positive-thm} implies that the {\em cone of curves} on $\bar X$ is generated (over $\rr_{\geq 0}$) by $C_1$ and $C_2$. It follows that the nef cone of $\bar X$ is
\begin{align*}
\nef(\bar X) &= \{a_1C_1 + a_2C_2: a_1, a_2 \in \rr_{\geq 0},\ (a_1C_1+a_2C_2, C_i) \geq 0,\ 1 \leq i \leq 2\}\\
			 &= \{a_1C_1 + a_2C_2:  a_1, a_2 \in \rr_{\geq 0},\ a_1 d_\delta \geq a_2 \geq a_1m_\delta \delta(g_{n+1})/d_{\delta} \}\quad \text{(using \eqref{intersection-matrix})}	 
\end{align*} 
In particular, the `lower edge' of $\nef(\bar X)$ is the half line $\{(a_1, a_2) \in \rr_{\geq 0}^2: a_2 = a_1m_\delta \delta(g_{n+1})/d_\delta\}$. Since any nef divisor is a limit of ample divisors and large multiples of ample divisors have global sections, it follows that 
\begin{align}
\frac{m_\delta}{d_\delta} \delta(g_{n+1}) = \inf\left\{\frac{\delta(f)}{\deg(f)}: f\ \text{is a non-constant polynomial in}\ \cc[x,y]\right\} \label{delta-deg-ratio}.
\end{align}
It follows from \eqref{alpha-delta} and \eqref{delta-deg-ratio} that \eqref{alphaquality} holds with equality in this case, as required. 

\paragraph{Case 2: $\delta(g_{n+1}) < 0$.} In this case it follows as in Case 3 of the proof of Theorem \ref{positive-thm} that $C_1$ is in the interior of the cone $\NE(\bar X)$ of curves on $\bar X$. \cite[Lemma II.4.12]{kollar-rational-curves} implies that $\NE(\bar X)$ has an edge of the form $\{r (C_1 - aC_2): r \geq 0\}$ for some $a > 0$, and moreover, there exists $r> 0$ such that $rC_1 - arC_2$ is linearly equivalent to some {\em irreducible} curve $C$ on $\bar X$. Pick $g \in \cc[x,y]$ such that $C \cap \cc^2$ is the zero set of $g$. Then $\deg(g) = r$ and $\delta(g) = - ar$. Since the `other' edge of $\NE(\bar X)$ is spanned by $C_2$, it follows that for all $f \in \cc[x,y]\setminus\cc$,
\begin{align}
\frac{\delta(f)}{\deg(f)} \geq \frac{\delta(g)}{\deg(g)}. \label{f-g-ratio-comparison}
\end{align}

The assertions of Theorem \ref{alpha-thm} now follow from the conclusions of the above 2 cases together with \eqref{alpha-delta}, Lemma \ref{ratio-lemma} and Corollary \ref{ratio-cor}.
\end{proof}

\begin{proof}[Proof of Corollary \ref{semi-cor}]
We continue to assume that $\delta \neq \deg$ and use the notations of the proof of Theorem \ref{alpha-thm}. Note that $\delta$ determines an analytic compactification of $\cc^2$ iff $C_1$ is {\em contractible} iff $(C_1, C_1) < 0$ (by Grauert's criterion \cite[Theorem 14.20]{badescu}) iff $\delta(g_{n+1}) > 0$ (due to \eqref{intersection-matrix}). Assertion \ref{analytic-cone} of Corollary \ref{semi-cor} then follows from identity \eqref{delta-deg-ratio} and the observation from Case 2 of the proof of Theorem \ref{alpha-thm} that if $\delta(g_{n+1}) < 0$, then the positive $x$-axis is in the interior of $C_\delta$. For Assertion \ref{algebraic-cone}, note that Lemma \ref{ratio-lemma}, Corollary \ref{ratio-cor} and identity \eqref{delta-deg-ratio} together imply that the `lower edge' of $\nef(\bar X)$ is spanned by an effective curve iff $g_{n+1}$ is a polynomial. Assertion \ref{algebraic-cone} now follows from the preceding sentence and Theorem \ref{algebraic-thm}.  
\end{proof}

\begin{example}[An example where \eqref{alphaquality} does not hold] \label{negative-example}
Let $\delta$ be the semidegree on $\cc(x,y)$ defined as follows:
\begin{align*}
\delta(f(x,y)) := \deg_x\left(f(x,y)|_{y = x^{-1} + \xi x^{-2}}\right)\quad \text{for all}\ f \in \cc(x,y)\setminus\{0\}, 
\end{align*}
where $\xi$ is an indeterminate. Then the key forms of $\delta$ are $x,y, y - x^{-1}$, and therefore \eqref{alpha-delta} implies that 
\begin{align}
\alpha(\delta) = \delta(y-x^{-1})/\deg(y-x^{-1}) = -2. \label{neg-example-alpha}
\end{align}
Now consider the surface $\bar X$ from Proposition \ref{compact-prop} and let $C$ be the closure in $\bar X$ of the curve $y = 0$ on $\cc^2$. Then in the notation of Proposition \ref{compact-prop}, $C$ is linearly equivalent to $\deg(y)C_1 + \delta(y)C_2 = C_1 - C_2$. It follows from \eqref{intersection-matrix} that $(C,C) = -1/3 < 0$, so that \cite[Lemma II.4.12]{kollar-rational-curves} implies that $C$ spans an edge of the cone of curves on $\bar X$, i.e.\ the polynomial $g$ from Case 2 of the proof of Theorem \ref{alpha-thm} is $y$. It then follows from identities \eqref{f-g-ratio-comparison} and \eqref{neg-example-alpha} that
\begin{align*}
\inf\left\{\frac{\delta(f)}{d_\delta\deg(f)}: f \in \cc[x,y]\setminus\cc \right\} = \frac{\delta(g)}{d_\delta \deg(g)} = -1 > \alpha(\delta).
\end{align*}
\end{example}

\begin{example}[The {\em semigroup of values} does not distinguish semidegrees that determine algebraic compactifications of $\cc^2$] \label{same-semi-example}
Let $\delta$ be the semidegree on $\cc(x,y)$ defined as follows:
\begin{align*}
\delta(f(x,y)) := 2\deg_x\left(f(x,y)|_{y = x^{5/2} + x^{-1} + \xi x^{-3/2}}\right)\quad \text{for all}\ f \in \cc(x,y)\setminus\{0\}, 
\end{align*}
where $\xi$ is an indeterminate. Then the key forms of $\delta$ are $x,y, y^2 - x^5, y^2 - x^5 - 2x^{-1}y$, with corresponding $\delta$-values $2, 5, 3, 1$. Since $\delta$-value of the last key polynomial is {\em positive}, it follows from the arguments of the proof of Corollary \ref{semi-cor} that $\delta$ determines an analytic compactification of $\cc^2$. But the last key form of $\delta$ is {\em not} a polynomial, so that the compactification determined by $\delta$ is {\em not} algebraic (Theorem \ref{algebraic-thm}). On the other hand, it follows from our computation of the values of $\delta$ and Corollary \ref{ratio-cor} that the semigroup of values of $\delta$ on polynomials is 
\begin{align*}
N_\delta := \{\delta(f) : f \in \cc[x,y]\} = \{2, 3, 4, \cdots \}.
\end{align*}
Now let $\delta'$ be the weighted degree on $(x,y)$-coordinates corresponding to weights $2$ for $x$ and $3$ for $y$. Then $\delta'$ determines an {\em algebraic} compactification of $\cc^2$, namely the weighted projective surface $\pp^2(1,2,3)$. But $N_\delta = N_{\delta'}$. 
\end{example}

\bibliographystyle{alpha}
\bibliography{../../utilities/bibi}

\def\cprime{$'$} \def\polhk#1{\setbox0=\hbox{#1}{\ooalign{\hidewidth
  \lower1.5ex\hbox{`}\hidewidth\crcr\unhbox0}}}
\begin{thebibliography}{CPRL02}

\bibitem[Abh78]{abhyankar-semigroup}
Shreeram~S. Abhyankar.
\newblock On the semigroup of a meromorphic curve. {I}.
\newblock In {\em Proceedings of the {I}nternational {S}ymposium on {A}lgebraic
  {G}eometry ({K}yoto {U}niv., {K}yoto, 1977)}, pages 249--414, Tokyo, 1978.
  Kinokuniya Book Store.

\bibitem[B{\u{a}}d01]{badescu}
Lucian B{\u{a}}descu.
\newblock {\em Algebraic surfaces}.
\newblock Universitext. Springer-Verlag, New York, 2001.
\newblock Translated from the 1981 Romanian original by Vladimir Ma{\c{s}}ek
  and revised by the author.

\bibitem[CPRL02]{campillo-piltant-lopez-cones-surfaces}
Antonio Campillo, Olivier Piltant, and Ana~J. Reguera-L{\'o}pez.
\newblock Cones of curves and of line bundles on surfaces associated with
  curves having one place at infinity.
\newblock {\em Proc. London Math. Soc. (3)}, 84(3):559--580, 2002.

\bibitem[FJ04]{favsson-tree}
Charles Favre and Mattias Jonsson.
\newblock {\em The valuative tree}, volume 1853 of {\em Lecture Notes in
  Mathematics}.
\newblock Springer-Verlag, Berlin, 2004.

\bibitem[FJ07]{favsson-eigen}
Charles Favre and Mattias Jonsson.
\newblock Eigenvaluations.
\newblock {\em Ann. Sci. \'Ecole Norm. Sup. (4)}, 40(2):309--349, 2007.

\bibitem[Jon12]{jonsson-dykovich}
Mattias Jonsson.
\newblock Dynamics on {B}erkovich spaces in low dimensions.
\newblock \url{http://arxiv.org/abs/1201.1944}, 2012.

\bibitem[Kol96]{kollar-rational-curves}
J{\'a}nos Koll{\'a}r.
\newblock {\em Rational curves on algebraic varieties}, volume~32 of {\em
  Ergebnisse der Mathematik und ihrer Grenzgebiete. 3. Folge. A Series of
  Modern Surveys in Mathematics [Results in Mathematics and Related Areas. 3rd
  Series. A Series of Modern Surveys in Mathematics]}.
\newblock Springer-Verlag, Berlin, 1996.

\bibitem[Mac36]{maclane-key}
Saunders MacLane.
\newblock A construction for absolute values in polynomial rings.
\newblock {\em Trans. Amer. Math. Soc.}, 40(3):363--395, 1936.

\bibitem[Mon10a]{sub1}
Pinaki Mondal.
\newblock Projective completions of affine varieties via degree-like functions.
\newblock \url{http://arxiv.org/abs/1012.0835}, 2010.

\bibitem[Mon10b]{pisis}
Pinaki Mondal.
\newblock Towards a {B}ezout-type theory of affine varieties.
\newblock \url{http://hdl.handle.net/1807/24371}, March 2010.
\newblock PhD Thesis.

\bibitem[Mon11]{sub2-1}
Pinaki Mondal.
\newblock Analytic compactifications of $\cc^2$ part {I} - curvettes at
  infinity.
\newblock \url{http://arxiv.org/abs/1110.6905}, 2011.

\bibitem[Mon13]{contractibility}
Pinaki Mondal.
\newblock An effective criterion for algebraic contractibility of rational
  curves.
\newblock \url{http://arxiv.org/abs/1301.0126}, 2013.

\bibitem[Mum61]{mumford-normal}
David Mumford.
\newblock The topology of normal singularities of an algebraic surface and a
  criterion for simplicity.
\newblock {\em Inst. Hautes \'Etudes Sci. Publ. Math.}, (9):5--22, 1961.

\bibitem[SS94]{sathaye-stenerson}
Avinash Sathaye and Jon Stenerson.
\newblock Plane polynomial curves.
\newblock In {\em Algebraic geometry and its applications ({W}est {L}afayette,
  {IN}, 1990)}, pages 121--142. Springer, New York, 1994.

\end{thebibliography}

\end{document}